\newlist{myEnumerate}{enumerate}{9}
\setlist[myEnumerate,1]{label=(\Roman*)}
\setlist[myEnumerate,2]{label=(\arabic*)}
\setlist[myEnumerate,3]{label=(\Alph*)}
\setlist[myEnumerate,4]{label=(\roman*)}
\setlist[myEnumerate,5]{label=(\alph*)}
\theoremstyle{definition}
\newtheorem{theorem}{Theorem}[section]
\newtheorem{corollary}[theorem]{Corollary}
\newtheorem{lemma}[theorem]{Lemma}
\newtheorem{proposition}[theorem]{Proposition}
\newtheorem{definition}[theorem]{Definition}
\newtheorem{rem}[theorem]{Remark}
\newtheorem{exam}[theorem]{Example}
\definecolor{mjo}{rgb}{0,0,.9}
\global\long\def\gen{\text{\sf GEN}}
\global\long\def\mex{\operatorname{mex}}
\global\long\def\nim{\operatorname{nim}}
\global\long\def\opt{\operatorname{Opt}}
\global\long\def\type{\operatorname{type}}
\global\long\def\otype{\operatorname{otype}}
\global\long\def\spr{d}
\begin{document}
\setlength{\jot}{0pt} 

\title[Impartial achievement games for generating nilpotent groups]
{Impartial achievement games for generating\\  nilpotent groups}

\author{Bret J.~Benesh}
\address{
Department of Mathematics,
College of Saint Benedict and Saint John's University,
37 College Avenue South,
Saint Joseph, MN 56374-5011, USA
}
\email{bbenesh@csbsju.edu}
\author{Dana C.~Ernst}
\author{N\'andor Sieben}
\address{
Department of Mathematics and Statistics,
Northern Arizona University PO Box 5717,
Flagstaff, AZ 86011-5717, USA
}
\email{Dana.Ernst@nau.edu, Nandor.Sieben@nau.edu}
\subjclass[2010]{91A46, 
20D30
}
\keywords{impartial game, maximal subgroup, nilpotent group}

\date{\today}


\begin{abstract}
We study an impartial game introduced by Anderson and Harary.  The game is played by two players who alternately choose previously-unselected elements of a finite group. The first player who builds a generating set from the jointly-selected elements wins. 
We determine the nim-numbers of this game for finite groups of the form $T \times H$, where $T$ is a $2$-group and $H$ is a group of odd order. This includes all nilpotent and hence abelian groups.
\end{abstract}

\maketitle


\section{Introduction}


Anderson and Harary~\cite{anderson.harary:achievement} introduced an impartial combinatorial game in which two players alternately take turns selecting previously-unselected elements of a finite group $G$ until the group is generated by the jointly-selected elements. The first player who builds a generating set from the jointly-selected elements wins this achievement game denoted by $\gen(G)$. The outcome of $\gen(G)$ was determined for finite abelian groups in~\cite{anderson.harary:achievement}. In~\cite{Barnes}, Barnes provides criteria for determining the outcome for an arbitrary finite group, and he applies his criteria to determine the outcome of some of the more familiar finite groups, including cyclic, abelian, dihedral, symmetric, and alternating groups.

A fundamental problem in game theory is to determine nim-numbers of impartial two-player games.  The nim-number allows for the easy calculation of the outcome of the sum of games.  A general theory of impartial games appears in~\cite{albert2007lessons,SiegelBook}. A framework for computing nim-numbers for $\gen(G)$ is developed in~\cite{ErnstSieben}, and the authors determine the nim-numbers for $\gen(G)$ when $G$ is a cyclic, abelian, or dihedral group. The nim-numbers for symmetric and alternating groups are determined in~\cite{BeneshErnstSiebenSymAlt} while generalized dihedral groups are addressed in~\cite{BeneshErnstSiebenGeneralizedDihedral}.

The task in this paper is to determine the nim-numbers of $\gen(G)$ for groups of the form $G=T\times H$ where $T$ is a finite $2$-group and $H$ is a group of odd order.  These groups have a Sylow $2$-direct factor.  Finite nilpotent groups are precisely the groups that can be written as a direct product of their Sylow subgroups, so the class of groups with a Sylow $2$-direct factor contains the nilpotent groups.  Note that groups with a Sylow $2$-direct factor are necessarily solvable by the Feit--Thompson Theorem~\cite{FeitThompson}.

Anderson and Harary~\cite{anderson.harary:achievement} also introduced a related avoidance game in which the player who cannot avoid building a generating set loses. As in the case of the achievement game, Barnes~\cite{Barnes} determines the outcome for a few standard families of groups, as well as a general condition to determine the player with the winning strategy.  The determination of the nim-numbers for the avoidance game for several families of groups appears in~\cite{BeneshErnstSiebenSymAlt,BeneshErnstSiebenDNG,ErnstSieben}. Similar algebraic games are studied by Brandenburg in \cite{brandenburg:algebraicGames}.

\section{Preliminaries}\label{section:preliminaries}


We now give a more precise description of the achievement game $\gen(G)$ played on a finite group $G$.   We also recall some definitions and results from~\cite{ErnstSieben}. In this paper, the cyclic group of order $n$ is denoted by $\mathbb{Z}_n$.  Other notation used throughout the paper is standard such as in~\cite{IsaacsFiniteGroups}. 
The nonterminal positions of $\gen(G)$ are exactly the nongenerating subsets of $G$. A terminal position is a generating set $S$ of $G$ such that there is a $g \in S$ satisfying $\langle S \setminus \{g\}\rangle < G$.  
The starting position is the empty set since neither player has chosen an element yet.
The first player chooses $x_{1}\in G$, and the designated player selects $x_{k}\in G\setminus\{x_{1},\ldots,x_{k-1}\}$ at the $k$th turn.
A position $Q$ is an option of $P$ if $Q=P \cup \{g\}$ for some $g \in G \setminus P$. The set of options of $P$ is denoted by $\opt(P)$. The player who builds a generating set from the jointly-selected elements wins the game. 

It is well-known that the second player has a winning strategy if and only if the nim-number of the game is $0$.  The only position of $\gen(G)$ for a trivial $G$ is the empty set, and so the second player wins before the first player can make a move.  Thus, $\gen(G)=*0$ if $G$ is trivial.  For this reason, we will assume that $G$ is nontrivial for the remainder of this section, and we will not need to consider trivial groups until Section~\ref{sec:final}.

The set $\mathcal{M}$ of maximal subgroups play a significant role in the game.  The last two authors define in~\cite{ErnstSieben} the set 
\[ 
\mathcal{I}:=\{\cap\mathcal{N}\mid\emptyset\not=\mathcal{N\subseteq\mathcal{M}}\} 
\]
of \emph{intersection subgroups}, which is the set of all possible intersections of maximal subgroups. We also define $\mathcal{J}:=\mathcal{I}\cup\{G\}$. The smallest intersection subgroup is the Frattini subgroup $\Phi(G)$ of $G$.

For any position $P$ of $\gen(G)$ let 
$$
\lceil P \rceil:=\bigcap \{I\in \mathcal{J} \mid P\subseteq I \}
$$ 
be the the smallest element of $\mathcal{J}$ containing $P$. We write $\lceil P,g_1,\ldots,g_n\rceil$ for $\lceil P \cup \{g_1,\ldots,g_n\}\rceil$ and $\lceil g_1,\ldots,g_n \rceil$ for $\lceil \{g_1,\ldots,g_n\}\rceil$ if $g_1,\ldots,g_n \in G$ .

Two positions $P$ and $Q$ are \emph{structure equivalent} if $\lceil P \rceil=\lceil Q \rceil$. The \emph{structure class} $X_I$ of $I\in\mathcal{J}$ is the equivalence class of $I$ under this equivalence relation.  Note that the definitions of $\lceil P \rceil$ and $X_I$ differ from those given in~\cite{BeneshErnstSiebenSymAlt,BeneshErnstSiebenDNG,BeneshErnstSiebenGeneralizedDihedral,ErnstSieben}, but it is easy to see that these definitions are equivalent to the originals.  
We let $\mathcal{Y}:=\{X_{I}\mid I\in\mathcal{J}\}$. We say $X_J$ is an option of $X_I$ if $Q\in\opt(P)$ for some $P\in X_I$ and $Q\in X_J$. The set of options of $X_I$ is denoted by $\opt(X_I)$.

The \emph{type} of the structure class $X_{I}$ is the triple 
\[
\type(X_{I}):=(|I| \text{ mod } 2,\nim(P),\nim(Q)),
\] where $P,Q\in X_{I}$ with $|P|$ even and $|Q|$ odd.  This is well-defined by~\cite[Proposition~4.4]{ErnstSieben}.        We define the \emph{option type} of $X_I$ to be the set 
\[
\otype(X_I):=\{\type(X_J) \mid X_J \in \opt(X_I)\}.
\]
We say the parity of $X_I$ is the parity of $|I|$.

The nim-number of the game  is the  nim-number 
of the initial position $\emptyset$, which is an even-sized subset of $\Phi(G)$. 
Because of this, $\nim(\gen(G))$ is the second component of 
\[
\type(X_{\Phi(G)}) =(|\Phi(G)| \text{ mod } 2,\nim(\emptyset),\nim(\{e\})).
\] 

 We use the following result of \cite{ErnstSieben} as our main tool to compute nim-numbers.  Note that $\type(X_G)=(|G|\!\mod 2,0,0)$. Recall that $\mex(A)$ for a subset $A \subseteq \mathbb{N} \cup\{0\}$ is the least nonnegative integer not in $A$.

\begin{proposition} 
For $X_I\in\mathcal{Y}$ define
\[ 
A_I=\{ a \mid (\epsilon, a, b)\in \otype(X_I) )\},\qquad
B_I=\{b \mid (\epsilon, a, b)\in \otype(X_I) )\}.
\]
Then $\type(X_I)=(|I| \!\mod 2,a,b)$ where
\begin{gather*}
a:=\mex(B_I),\  b:=\mex(A_I\cup\{a\}) \text{ if $|I|$ is even} \\
b:=\mex(A_I),\  a:=\mex(B_I\cup\{b\}) \text{ if $|I|$ is odd}. \\
\end{gather*}
\end{proposition}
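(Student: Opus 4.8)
The plan is to unwind the defining recursion $\nim(P)=\mex\{\nim(Q)\mid Q\in\opt(P)\}$ for the nim-number of a position, using the fact, equivalent to the well-definedness of $\type$ established in \cite[Proposition~4.4]{ErnstSieben}, that $\nim(P)$ depends only on the structure class of $P$ and the parity of $|P|$; this is exactly what lets one read off a component of $\type(X_I)$ by evaluating $\nim$ at any convenient representative of $X_I$ of the right parity. First I would set aside the case $I=G$: then every $P\in X_I$ generates $G$, hence is terminal with no options, and the statement collapses to the value $\type(X_G)=(|G|\bmod 2,0,0)$ recorded above. For the rest, $I$ is a proper intersection subgroup, so every $P\in X_I$ satisfies $P\subseteq I<G$ and is nonterminal.

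The crux is that the structure class of an option of $P\in X_I$ is controlled by the added element alone: for $g\in G\setminus P$ I would show $\lceil P\cup\{g\}\rceil=\lceil I\cup\{g\}\rceil$, where ``$\subseteq$'' is immediate from $P\subseteq I$ and ``$\supseteq$'' holds because $\lceil P\cup\{g\}\rceil$ is a member of $\mathcal{J}$ containing $P$, hence contains $\lceil P\rceil=I$. Thus $P\cup\{g\}\in X_I$ exactly when $g\in I\setminus P$, and $P\cup\{g\}\in X_{\lceil I\cup\{g\}\rceil}$ with $\lceil I\cup\{g\}\rceil\supsetneq I$ otherwise; and since $G\setminus I\subseteq G\setminus P$, every class $X_{\lceil I\cup\{g\}\rceil}$ with $g\notin I$ genuinely occurs among the options of $P$. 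Running this over all representatives, and using that $I\setminus\{e\}\in X_I$ is a representative distinct from $I$, yields $\opt(X_I)=\{X_I\}\cup\{X_{\lceil I\cup\{g\}\rceil}\mid g\in G\setminus I\}$; in particular $X_I\in\opt(X_I)$, so $\type(X_I)\in\otype(X_I)$, whence $a\in A_I$ and $b\in B_I$. A one-line argument (move from $I\setminus\{e\}$ to $I$ by adjoining $e$, which flips parity) shows that one of $a,b$ appears among the option nim-numbers of the other, hence $a\ne b$.

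With these facts the computation is short. Evaluating $\nim$ at an even-sized representative $P$ of $X_I$, every option has odd size, so by \cite[Proposition~4.4]{ErnstSieben} each option nim-number is the last component of the type of its structure class; as those classes sweep out $\opt(X_I)$ — except that they miss $X_I$ itself in the unlucky case $P=I$ — the set of option nim-numbers is $B_I$, or $B_I$ with $b$ possibly deleted, and in either case, using $X_I\in\opt(X_I)$ and $a\ne b$, its mex equals $\mex(B_I)$. Thus $a=\mex(B_I)$; symmetrically, using an odd-sized representative, $b=\mex(A_I)$. It then remains only to put these into the asserted shape: if $|I|$ is even then $a=\mex(B_I)$ while $b=\mex(A_I)=\mex(A_I\cup\{a\})$ since $a\in A_I$; if $|I|$ is odd then $b=\mex(A_I)$ while $a=\mex(B_I)=\mex(B_I\cup\{b\})$ since $b\in B_I$ — which is the claimed recursion. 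The step I expect to be the real obstacle is the locality assertion of the second paragraph, that every option structure class of $X_I$ is reachable from \emph{every} representative of $X_I$, since this is what legitimises computing the mex entirely within structure classes; once the identity $\lceil P\cup\{g\}\rceil=\lceil I\cup\{g\}\rceil$ is in place the rest is bookkeeping, the only genuinely delicate point being the case in which the chosen representative equals $I$, which momentarily removes the self-loop $X_I\to X_I$, together with the check via $a\ne b$ that the computed value is unaffected.
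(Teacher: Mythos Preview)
The paper does not prove this proposition here; it is quoted from \cite{ErnstSieben} and used as a black box. Your argument supplies a correct self-contained proof for every proper intersection subgroup $I\in\mathcal{I}$. The key identity $\lceil P\cup\{g\}\rceil=\lceil I\cup\{g\}\rceil$ for $P\in X_I$, the resulting description $\opt(X_I)=\{X_I\}\cup\{X_{\lceil I,g\rceil}\mid g\in G\setminus I\}$, the observation $a\neq b$ obtained from the move $I\setminus\{e\}\to I$, and the mex bookkeeping are all sound. Your handling of the ``unlucky'' case $P=I$ is correct: writing $B_I'$ for the set of odd-parity option nim-numbers reached from $I$ itself, one has $B_I=B_I'\cup\{b\}$, and from $a=\mex(B_I')$ together with $a\neq b$ one gets $a\notin B_I$ and $\{0,\dots,a-1\}\subseteq B_I'\subseteq B_I$, hence $a=\mex(B_I)$. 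You in fact prove slightly more than asserted, namely $a=\mex(B_I)$ and $b=\mex(A_I)$ simultaneously, with the parity of $|I|$ entering only cosmetically through $a\in A_I$ and $b\in B_I$.

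One small wrinkle concerns your treatment of $I=G$. You write that in this case ``the statement collapses to the value $\type(X_G)=(|G|\bmod 2,0,0)$ recorded above,'' but the formula in the proposition does \emph{not} give this value: terminal positions have no options, so $\otype(X_G)=\emptyset$, $A_G=B_G=\emptyset$, and the recursion literally returns $(0,0,1)$ when $|G|$ is even and $(1,1,0)$ when $|G|$ is odd, contradicting $\type(X_G)=(|G|\bmod 2,0,0)$. This is a defect of the statement as written (the recursion is only intended, and only ever applied in the paper, for $I\in\mathcal{I}$, with $\type(X_G)$ serving as the base case), not of your reasoning; but you should flag the discrepancy rather than assert that the formula collapses to the correct value there.
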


\begin{figure}
\includegraphics{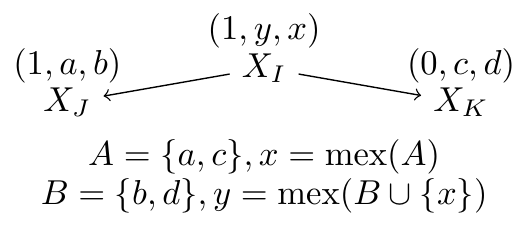}
\caption{\label{fig:type} Example of a calculation for $\type(X_I)$ if $\opt(X_I)=\{X_J,X_K\}$ where $X_I$ and $X_J$ are odd and $X_K$ is even.  The ordered triples are the types of the structure classes.}
\end{figure}

The previous proposition implies that the type of a structure class $X_I$ is determined by the parity of $X_I$ and the types of the options of $X_I$.  Figure~\ref{fig:type} shows an example of this calculation when $X_I$ is odd.


\section{Deficiency}


We will develop some general tools in this section.  For a finite group $G$, the minimum size of a generating set is denoted by \[\spr(G) :=  \min\{|S| : \langle S \rangle=G\}.\]   The following definition, which first appeared in~\cite{BeneshErnstSiebenGeneralizedDihedral}, is closely related to $\spr(G)$.

\begin{definition}
The \emph{deficiency} of a subset $P$ of a finite group $G$ is the minimum size $\delta_G(P)$ of a subset $Q$ of $G$ such that $\langle P\cup Q\rangle=G$.   For a structure class $X_I$ of $G$, we define $\delta_G(X_I)$ to be $\delta_G(I)$.
\end{definition}

Note that $P \subseteq Q$ implies $\delta_G(P) \geq \delta_G(Q)$.  

\begin{proposition}\label{prop:deltaofsubsets}
If $S \in X_I$, then $\delta_G(S)=\delta_G(I)$.
\end{proposition}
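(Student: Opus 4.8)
The plan is to prove the two inequalities $\delta_G(S) \ge \delta_G(I)$ and $\delta_G(S) \le \delta_G(I)$ separately, and the whole argument rests on unwinding the definition of $\lceil \cdot \rceil$.

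First I would observe that $S \in X_I$ means $\lceil S \rceil = \lceil I \rceil = I$, and that $\lceil S \rceil = \bigcap\{J \in \mathcal{J} \mid S \subseteq J\}$ is by construction a subset of $G$ containing $S$; hence $S \subseteq I$. The remark immediately preceding the proposition (monotonicity of deficiency under inclusion) then yields $\delta_G(S) \ge \delta_G(I)$.

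For the reverse inequality I would take a subset $Q \subseteq G$ realizing the deficiency of $I$, so $\langle I \cup Q\rangle = G$ with $|Q| = \delta_G(I)$, and show that the very same $Q$ works for $S$, i.e. $\langle S \cup Q\rangle = G$. Suppose toward a contradiction that $\langle S \cup Q\rangle < G$; then it is contained in some maximal subgroup $M \in \mathcal{M} \subseteq \mathcal{J}$. In particular $S \subseteq M$ with $M \in \mathcal{J}$, so since $I = \lceil S \rceil$ is the smallest element of $\mathcal{J}$ containing $S$, we get $I \subseteq M$. Then $I \cup Q \subseteq M$, so $G = \langle I \cup Q\rangle \le M < G$, a contradiction. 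Therefore $\langle S \cup Q\rangle = G$, and by minimality $\delta_G(S) \le |Q| = \delta_G(I)$. Combining the two inequalities gives $\delta_G(S) = \delta_G(I)$, and $\delta_G(X_I) = \delta_G(I)$ by definition.

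I do not expect a serious obstacle here: the only real content is the observation that a proper subgroup containing $S$ must already contain all of $\lceil S \rceil$ because maximal subgroups lie in $\mathcal{J}$. The points to be careful about are quoting the correct direction of the monotonicity remark ($P \subseteq Q \Rightarrow \delta_G(P) \ge \delta_G(Q)$) and justifying $S \subseteq I$ from $S \in X_I$ rather than assuming it; everything else is bookkeeping with the definitions of $\mathcal{J}$, $\lceil \cdot \rceil$, and $X_I$.
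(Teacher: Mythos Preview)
Your proof is correct and is essentially the same as the paper's: both argue $S\subseteq I$ for one inequality and, for the other, take a set $Q$ with $\langle I\cup Q\rangle=G$ and show $\langle S\cup Q\rangle=G$ via the observation that any maximal subgroup containing $S$ must contain $I=\lceil S\rceil$. Your write-up is in fact slightly more explicit than the paper's, which simply asserts ``$I\subseteq M$ if and only if $S\subseteq M$ since $S\in X_I$'' without spelling out the $\lceil\,\cdot\,\rceil$ argument.
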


\begin{proof}
Let $n:=\delta_G(I)$ and $m:=\delta_G(S)$.  Since $S\subseteq I$, it follows as mentioned above that $n \leq m$. Now let $h_1,\ldots,h_n \in G$ such that $\langle I, h_1,\ldots,h_n\rangle=G$.  For a maximal subgroup $M$,  $I \subseteq M$ if and only if $S \subseteq M$ since $S \in X_I$.  Then since $\langle I, h_1,\ldots,h_n\rangle$ is not contained in any maximal subgroup, we conclude that neither is $\langle S, h_1,\ldots,h_n\rangle$.  Thus, $\langle S, h_1,\ldots,h_n\rangle=G$ and $\delta_G(S) \leq \delta_G(I)$, so $\delta_G(S)=\delta_G(I)$. 
\end{proof}

\begin{corollary}\label{cor:deltaofsubsets}
The deficiency of a generating set of a finite group $G$ is $0$ and $\delta_G(\emptyset)=\delta_G(\Phi(G))=d(G)$.
\end{corollary}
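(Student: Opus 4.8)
The plan is to prove the two assertions in Corollary~\ref{cor:deltaofsubsets} separately, both as quick consequences of the definition of deficiency together with Proposition~\ref{prop:deltaofsubsets}. For the first claim, if $S$ is a generating set of $G$, then $\langle S \cup \emptyset \rangle = \langle S \rangle = G$, so the empty set is a witnessing subset $Q$ of size $0$ in the definition of $\delta_G(S)$; since $0$ is the minimum possible size, $\delta_G(S)=0$.

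For the second claim, I would first observe that $\delta_G(\emptyset)=\spr(G)$ essentially by definition: a subset $Q$ with $\langle \emptyset \cup Q\rangle = G$ is exactly a generating set of $G$, so the minimum size of such a $Q$ is $\min\{|S| : \langle S\rangle = G\} = \spr(G)$. (Here I am reading $d(G)$ as the notation $\spr(G)$ introduced at the start of Section~3 via \verb|\def\spr{d}|; the corollary writes it as $d(G)$.) Next I would identify $\emptyset$ and $\Phi(G)$ as lying in the same structure class. Indeed, $\lceil \emptyset \rceil$ is the smallest element of $\mathcal{J}$ containing $\emptyset$, which is the intersection of all maximal subgroups, namely $\Phi(G)$; and $\lceil \Phi(G)\rceil = \Phi(G)$ since $\Phi(G)\in\mathcal{I}\subseteq\mathcal{J}$. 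Hence $\emptyset$ and $\Phi(G)$ are structure equivalent, both lying in $X_{\Phi(G)}$. Applying Proposition~\ref{prop:deltaofsubsets} with $S=\emptyset$ and $I=\Phi(G)$ gives $\delta_G(\emptyset)=\delta_G(\Phi(G))$, and combining with the previous paragraph yields $\delta_G(\emptyset)=\delta_G(\Phi(G))=\spr(G)=d(G)$.

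There is no real obstacle here; the only thing requiring a moment's care is confirming that $\emptyset$ and $\Phi(G)$ are genuinely in the same structure class (equivalently, that $\lceil\emptyset\rceil = \Phi(G)$), which I would justify by noting $\Phi(G)$ is the smallest intersection subgroup as stated in Section~\ref{section:preliminaries}. One should also handle the degenerate possibility that $G$ has no maximal subgroups, but the running assumption that $G$ is nontrivial rules this out, so $\mathcal{M}\neq\emptyset$ and $\Phi(G)$ is well-defined as an intersection subgroup.
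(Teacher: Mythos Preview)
Your proposal is correct and follows the intended approach: the paper states this result as a corollary to Proposition~\ref{prop:deltaofsubsets} without giving an explicit proof, and your argument --- verifying $\delta_G(S)=0$ for generating sets directly from the definition, then showing $\emptyset,\Phi(G)\in X_{\Phi(G)}$ and invoking Proposition~\ref{prop:deltaofsubsets} --- is exactly the justification one would expect. Your care in confirming $\lceil\emptyset\rceil=\Phi(G)$ and noting the nontriviality assumption is appropriate.
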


\begin{definition}
Let $G$ be a finite group, $\mathcal{E}$ be the set of even structure classes, and $\mathcal{O}$ be the set of odd structure classes in $\mathcal{Y}$.
We define the following sets:
\[
\begin{aligned}
\mathcal{D}_m &:= \{ X_I\in \mathcal{Y} \mid \delta_G(I)=m \}, & \mathcal{D}_{\geq m} &:= \bigcup \{\mathcal{D}_k \mid k \geq m\} \\ 
\mathcal{E}_m &:= \mathcal{E}\cap \mathcal{D}_m, & \mathcal{E}_{\geq m} &:= \bigcup \{\mathcal{E}_k \mid k \geq m\}\\ 
\mathcal{O}_m &:= \mathcal{O}\cap \mathcal{D}_m,  & \mathcal{O}_{\geq m} & := \bigcup \{\mathcal{O}_k \mid k \geq m\} \\  
\end{aligned}
\]
\end{definition}

\begin{proposition}\cite[Proposition~3.8 and Corollary~3.9]{BeneshErnstSiebenGeneralizedDihedral}\label{prop:setoptions}
Let $G$ be a finite group and $m$ be a positive integer. 
If $X_I \in \mathcal{D}_m$, then $X_I$ has an option in $\mathcal{D}_{m-1}$, and every option of $X_I$ is in $\mathcal{D}_m \cup \mathcal{D}_{m-1}$.
Moreover, if $X_I \in \mathcal{E}_m$, then $X_I$ has an option in $\mathcal{E}_{m-1}$, and every option of $X_I$ is in $\mathcal{E}_m \cup \mathcal{E}_{m-1}$.
\end{proposition}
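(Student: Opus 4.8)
The plan is to reduce everything to two elementary facts. The first is a deficiency estimate: for any $P \subseteq G$ and any $g \in G$ we have $\delta_G(P) - 1 \le \delta_G(P \cup \{g\}) \le \delta_G(P)$. The upper bound is the monotonicity noted just before Proposition~\ref{prop:deltaofsubsets}; for the lower bound, if $Q$ is a minimal set with $\langle P \cup \{g\} \cup Q\rangle = G$, then $\{g\} \cup Q$ together with $P$ generates $G$, so $\delta_G(P) \le |Q| + 1 = \delta_G(P \cup \{g\}) + 1$. The second fact is a description of options: since $\lceil P \cup \{g\}\rceil = \lceil \lceil P\rceil \cup \{g\}\rceil$ (immediate from the definition of $\lceil\cdot\rceil$, because $P \subseteq K \iff \lceil P\rceil \subseteq K$ for $K \in \mathcal{J}$) and since $I \in X_I$, every option of $X_I$ has the form $X_{\lceil I, g\rceil}$ for some $g \in G$, and $\lceil I, g\rceil$ is a subgroup containing $I$; conversely, for $g \in G \setminus I$, taking $P = I$ shows $X_{\lceil I, g\rceil} \in \opt(X_I)$.

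\textbf{The first assertion.} Suppose $X_I \in \mathcal{D}_m$, so $\delta_G(I) = m \ge 1$. If $X_J \in \opt(X_I)$, write $J = \lceil I, g\rceil$; then $I \cup \{g\} \in X_J$, so by Proposition~\ref{prop:deltaofsubsets} and the deficiency estimate, $\delta_G(J) = \delta_G(I \cup \{g\}) \in \{m-1, m\}$, i.e.\ $X_J \in \mathcal{D}_{m-1} \cup \mathcal{D}_m$. To exhibit an option in $\mathcal{D}_{m-1}$, choose $h_1, \ldots, h_m \in G$ with $\langle I, h_1, \ldots, h_m\rangle = G$. Then $\delta_G(I \cup \{h_1\}) \le m-1$ (witnessed by $\{h_2,\ldots,h_m\}$) and $\delta_G(I \cup \{h_1\}) \ge m-1$ by the estimate, so it equals $m-1$; in particular $h_1 \notin I$ (otherwise adjoining $h_1$ would not change the deficiency), so $X_{\lceil I, h_1\rceil} \in \opt(X_I) \cap \mathcal{D}_{m-1}$ by the description of options.

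\textbf{The ``moreover'' clause.} Here the only extra input is Lagrange's theorem: if $|I|$ is even and $J = \lceil I, g\rceil$ is a subgroup containing $I$, then $|I|$ divides $|J|$, so $|J|$ is even; hence every option of an even structure class is even. Intersecting the two conclusions of the first assertion with $\mathcal{E}$ then gives at once that every option of $X_I \in \mathcal{E}_m$ lies in $\mathcal{E}_{m-1} \cup \mathcal{E}_m$, and that the option $X_{\lceil I, h_1\rceil}$ constructed above lies in $\mathcal{E}_{m-1}$.

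\textbf{Main obstacle.} The substance is not in any computation but in the bookkeeping of the first ingredient: one must correctly translate the game-theoretic notion ``$X_J$ is an option of $X_I$'' into the algebraic statement $J = \lceil I, g\rceil$ with $I \le J$, and must take care that the candidate produced for $\mathcal{D}_{m-1}$ is genuinely an \emph{option}, which is exactly the point where one needs $h_1 \notin I$. Once that is in place, the rest is the one-line deficiency inequality and Lagrange's theorem.
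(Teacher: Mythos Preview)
The paper does not prove this proposition; it is imported verbatim from \cite[Proposition~3.8 and Corollary~3.9]{BeneshErnstSiebenGeneralizedDihedral}, so there is no in-paper argument to compare against. Your proof is correct and is essentially the natural one: the deficiency inequality $\delta_G(P)-1\le\delta_G(P\cup\{g\})\le\delta_G(P)$, the identification of options of $X_I$ with structure classes $X_{\lceil I,g\rceil}$ via $\lceil P\cup\{g\}\rceil=\lceil\lceil P\rceil\cup\{g\}\rceil$, and Lagrange for the parity part are exactly the ingredients used in the cited source. The only point worth being explicit about---and you handled it---is that the witness $h_1$ really lies outside $I$, so that $X_{\lceil I,h_1\rceil}$ is a genuine option rather than $X_I$ itself.
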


Note that $\mathcal{D}_0=\{X_G\}$. Also, Proposition~\ref{prop:setoptions} implies that $\nim(P) \not=0$ for all $X_{\lceil P \rceil} \in \mathcal{D}_{1}$.  In the next lemma, we will use $\pi_i$ to denote the projection of a direct product to its $i$th factor.

\begin{lemma}\label{lem:FactorDeficiencies}
If $G$ and $H$ are finite groups and $S \subseteq G \times H$, then 
\[
\delta_{G \times H}(S) \geq\max\{\delta_{G}(\pi_1(S)),\delta_{H}(\pi_2(S))\}.
\]
\end{lemma}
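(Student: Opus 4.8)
The plan is to exploit the functoriality of generation under the projection homomorphisms $\pi_1$ and $\pi_2$. The key observation is that if a subset of $G \times H$ generates $G \times H$, then its image under $\pi_i$ generates the $i$th factor, since $\pi_i$ is a surjective homomorphism and homomorphic images of generating sets are generating sets.

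So the argument would run as follows. Let $n := \delta_{G \times H}(S)$, and choose $T \subseteq G \times H$ with $|T| = n$ and $\langle S \cup T \rangle = G \times H$. Applying $\pi_1$, I get $\pi_1(\langle S \cup T\rangle) = \pi_1(G \times H) = G$; since $\pi_1$ is a homomorphism, $\pi_1(\langle S \cup T\rangle) = \langle \pi_1(S \cup T)\rangle = \langle \pi_1(S) \cup \pi_1(T)\rangle$. Hence $\langle \pi_1(S) \cup \pi_1(T)\rangle = G$, so $\pi_1(T)$ is a witness for the deficiency of $\pi_1(S)$ in $G$, giving $\delta_G(\pi_1(S)) \leq |\pi_1(T)| \leq |T| = n$. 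The identical argument with $\pi_2$ gives $\delta_H(\pi_2(S)) \leq n$. Taking the maximum of these two inequalities yields $\delta_{G\times H}(S) = n \geq \max\{\delta_G(\pi_1(S)), \delta_H(\pi_2(S))\}$, as desired.

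There is essentially no main obstacle here: the only thing to be slightly careful about is the inequality $|\pi_i(T)| \leq |T|$, which holds because $\pi_i$ restricted to $T$ is a function (it may fail to be injective, but that only shrinks the image). One should also handle the degenerate possibility that $\delta_{G \times H}(S)$ is witnessed by $T = \emptyset$ (i.e.\ $S$ already generates), but the computation above goes through verbatim in that case. No appeal to the earlier structural machinery (structure classes, types, nim-numbers) is needed; this is a purely group-theoretic lemma about how deficiency interacts with direct products, and it will later be combined with a matching upper bound to compute deficiencies in $T \times H$.
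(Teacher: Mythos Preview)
Your proposal is correct and follows essentially the same argument as the paper: take a minimal completion set for $S$ in $G\times H$, project it to each factor, and observe that the projections complete $\pi_i(S)$ to generating sets of the factors. The paper's version is simply terser, writing the completion set explicitly as $\{(x_1,y_1),\ldots,(x_k,y_k)\}$ and noting that $\langle \pi_1(S),x_1,\ldots,x_k\rangle=G$ and $\langle \pi_2(S),y_1,\ldots,y_k\rangle=H$.
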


\begin{proof}
Let $(x_1,y_1),\ldots,(x_k,y_k) \in G \times H$ be such that $\langle S,(x_1,y_1),\ldots,(x_k,y_k) \rangle = G \times H$.  Then $\langle \pi_1(S),x_1,\ldots,x_k\rangle = G$ and $\langle \pi_2(S),y_1,\ldots,y_k\rangle = H$, which yields the desired result. 
\end{proof}

\begin{lemma}\label{lem:FullFactors}
If $G$ and $H$ are finite groups and $S \subseteq G$, then $\delta_{G\times H}(S \times H) = \delta_G(S)$.
\end{lemma}
\begin{proof}
By Lemma~\ref{lem:FactorDeficiencies}, we have $\delta_{G \times H}(S \times H) \geq \delta_G(S)$.   Now let $n:=\delta_G(S)$.  Then there exist $g_1,\ldots,g_n \in G$ such that $\langle S,g_1,\ldots,g_n \rangle=G$.  Then $\langle S \times H, (g_1,e),\ldots,(g_n,e) \rangle = G\times H$.  Thus, $\delta_{G}(S) \geq \delta_{G\times H}(S \times H)$.  
\end{proof}

\begin{lemma}
If $G$ and $H$ are finite groups, then 
\[
\max\{d(G),d(H)\} \leq d(G \times H) \leq d(G)+d(H).
\]
\end{lemma}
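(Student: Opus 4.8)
The plan is to prove the two inequalities separately, using the two preceding lemmas as the main engine.

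\textbf{Lower bound.} For $\max\{d(G),d(H)\} \le d(G\times H)$, I would simply invoke Lemma~\ref{lem:FactorDeficiencies} with $S=\emptyset$. By Corollary~\ref{cor:deltaofsubsets}, $\delta_{G\times H}(\emptyset)=d(G\times H)$, $\delta_G(\pi_1(\emptyset))=\delta_G(\emptyset)=d(G)$, and similarly $\delta_H(\emptyset)=d(H)$. Lemma~\ref{lem:FactorDeficiencies} then gives $d(G\times H)\ge\max\{d(G),d(H)\}$ directly. (Alternatively one can argue by hand: a generating set of $G\times H$ projects onto generating sets of both factors, so it has size at least $d(G)$ and at least $d(H)$.)

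\textbf{Upper bound.} For $d(G\times H)\le d(G)+d(H)$, I would chain the two lemmas. Take $S$ to be the empty subset of $G$ in Lemma~\ref{lem:FullFactors}; this gives $\delta_{G\times H}(\{e_G\}\times H)=\delta_G(\emptyset)=d(G)$, i.e. one can adjoin $d(G)$ elements to $\{e_G\}\times H$ and generate $G\times H$. It remains to see that $\{e_G\}\times H$ can itself be generated from the empty set by adjoining $d(H)$ elements, together with the observation that $\{e_G\}\times H\le\langle(e_G,h_1),\dots,(e_G,h_{d(H)})\rangle$ whenever $\langle h_1,\dots,h_{d(H)}\rangle=H$. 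Concretely: pick $h_1,\dots,h_{d(H)}$ generating $H$ and $g_1,\dots,g_{d(G)}$ witnessing $\delta_{G\times H}(\{e_G\}\times H)=d(G)$; then $\langle(g_1,?),\dots,(g_{d(G)},?),(e_G,h_1),\dots,(e_G,h_{d(H)})\rangle=G\times H$ because this subgroup contains $\{e_G\}\times H$ and enough elements to build all of $G\times H$ on top of it. Hence $d(G\times H)\le d(G)+d(H)$. A cleaner route avoiding the witness elements $g_i$ entirely: if $\langle g_1,\dots,g_{d(G)}\rangle=G$ and $\langle h_1,\dots,h_{d(H)}\rangle=H$, then $\langle(g_1,e_H),\dots,(g_{d(G)},e_H),(e_G,h_1),\dots,(e_G,h_{d(H)})\rangle$ contains $G\times\{e_H\}$ and $\{e_G\}\times H$, hence equals $G\times H$.

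I do not anticipate any real obstacle here; the statement is a standard fact and both directions follow immediately from the machinery just established. The only point requiring a word of care is making sure the empty-set instances of Lemmas~\ref{lem:FactorDeficiencies} and~\ref{lem:FullFactors} are applied correctly (in particular that $\pi_i(\emptyset)=\emptyset$ and $\delta$ of the empty set is $d$ of the group, which is exactly Corollary~\ref{cor:deltaofsubsets}). So the proof will be essentially two lines, one citing Lemma~\ref{lem:FactorDeficiencies} for the lower bound and one combining Lemma~\ref{lem:FullFactors} with the subgroup-containment argument for the upper bound.
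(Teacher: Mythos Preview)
Your proposal is correct and essentially matches the paper's proof: the paper also invokes Lemma~\ref{lem:FactorDeficiencies} for the lower bound (applied to the singleton $\{(e,e)\}$ rather than to $\emptyset$, an immaterial variation) and uses exactly your ``cleaner route'' for the upper bound, writing down the $d(G)+d(H)$ generators $(g_i,e)$ and $(e,h_j)$ directly. One small slip in your first upper-bound route: taking $S=\emptyset$ in Lemma~\ref{lem:FullFactors} gives $S\times H=\emptyset$, not $\{e_G\}\times H$; you would want $S=\{e_G\}$ there---but since your cleaner route is the one the paper uses anyway, this does not affect the argument.
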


\begin{proof}
We have $d(G)=\delta_G(\{e\})$, so for $K \in \{G,H\}$ \[d(G \times H)=\delta_{G\times H}(\{e\} \times \{e\}) \geq \delta_K(\{e\}) = d(K)\] by Lemma~\ref{lem:FactorDeficiencies}. Hence $\max\{d(G),d(H)\} \leq d(G \times H)$.  Let $n=d(G)$ and $m=d(H)$, and let $g_1,\ldots,g_n \in G$ such that $\langle g_1,\ldots,g_n \rangle =G$ and $h_1,\ldots,h_m \in H$ such that $\langle h_1,\ldots,h_m \rangle =H$.  Then $\langle (g_1,e),\ldots,(g_n,e),(e,h_1),\ldots,(e,h_m)\rangle = G \times H$, so 
\[
d(G \times H) \leq d(G)+d(H).
\qedhere
\] 
\end{proof}


\section{The Achievement Game $\gen(T \times H)$}\label{sec:final}


We now determine the nim-number of $\gen(T \times H)$ where $T$ is a finite $2$-group and $H$ has odd order.  We will split the analysis into different cases according to the parity of $|T \times H|$ and the value of $d(T \times H)$.  

If $T$ is trivial, then $T \times H \cong H$ and we can apply the following refinement of~\cite[Corollary~4.8]{ErnstSieben}.

\begin{proposition}\label{prop:OrderGIsOdd}
If $|H|$ is odd, then
\[
	   \gen(H) = 
	     \begin{dcases}
               *0, & \text{if } |H|=1\\
	       *2, & \text{if } |H|>1 \text{ and } d(H) \in \{1,2\}\\
	       *1, & \text{otherwise}.
	     \end{dcases}
\] 
\end{proposition}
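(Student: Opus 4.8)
The plan is to compute $\type(X_{\Phi(H)})$ using the recursive $\mex$ formula in the Proposition of Section~\ref{section:preliminaries}, organizing the structure classes of $\gen(H)$ by deficiency. The trivial case $|H|=1$ is already handled in the text, so assume $|H|>1$. The crucial structural fact is that since $|H|$ is odd, every subgroup of $H$ has odd order, so \emph{every} structure class $X_I$ is odd; thus the parity bit is always $1$, and the recursion reduces to $b=\mex(A_I)$, $a=\mex(B_I\cup\{b\})$ for every class. Since $\gen(H)=*a$ where $(1,a,b)=\type(X_{\Phi(H)})$ and $\delta_H(\Phi(H))=d(H)$ by Corollary~\ref{cor:deltaofsubsets}, it suffices to understand the types of the classes in $\mathcal{D}_{d(H)}$, which by Proposition~\ref{prop:setoptions} depend only on the types of classes in $\mathcal{D}_{d(H)-1}$ and $\mathcal{D}_{d(H)}$.

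First I would compute types from the bottom up. We have $\type(X_G)=(1,0,0)$, the unique class in $\mathcal{D}_0$. For $X_I\in\mathcal{D}_1$: its only options lie in $\mathcal{D}_1\cup\mathcal{D}_0$ by Proposition~\ref{prop:setoptions}, and it has at least one option in $\mathcal{D}_0$, namely $X_G$ with type $(1,0,0)$. A short induction on the poset structure of $\mathcal{D}_1$ (ordered by the option relation) shows every class in $\mathcal{D}_1$ has type $(1,1,0)$: if all options have type in $\{(1,0,0),(1,1,0)\}$ and at least one is $(1,0,0)$, then $A_I=\{0\}$ or $\{0,1\}$, so $b=\mex(A_I)$; but we need $b$ — here is the subtlety — we must check $A_I=\{0\}$ forces things consistently. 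Actually with options only of types $(1,0,0)$ and $(1,1,0)$ we get $B_I\subseteq\{0,1\}$, $A_I\subseteq\{0,1\}$ with $0\in A_I$; one verifies the only fixed point of the recursion consistent with Proposition~\ref{prop:setoptions} (every $\mathcal{D}_1$ class has a $\mathcal{D}_0$ option) is $(1,1,0)$. Next, for $X_I\in\mathcal{D}_2$, options lie in $\mathcal{D}_2\cup\mathcal{D}_1$ with at least one in $\mathcal{D}_1$ (type $(1,1,0)$); the same style of induction gives type $(1,2,0)$ for every class in $\mathcal{D}_2$. Finally, for $X_I\in\mathcal{D}_m$ with $m\geq 3$: it has an option in $\mathcal{D}_{m-1}$, and all options are in $\mathcal{D}_m\cup\mathcal{D}_{m-1}$; inductively classes in $\mathcal{D}_{m-1}$ and $\mathcal{D}_m$ all have types with first nim-value $\geq 1$ and second $0$, and one shows every $\mathcal{D}_m$ class ($m\geq 2$) has type $(1,1,0)$ — wait, this needs care: a $\mathcal{D}_2$ class sees a $(1,1,0)$ option, forcing $1\in A_I$, hence $b=\mex(A_I)$ could be $0$, then $a=\mex(B_I\cup\{0\})$; if some option is even-component... no, all odd. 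I would nail down that for $m\geq 2$ every class in $\mathcal{D}_m$ has type $(1,1,0)$ unless $m=2$ where it is $(1,2,0)$ — the reader should recompute: the point is $d(H)\in\{1,2\}$ gives $\Phi(H)\in\mathcal{D}_1\cup\mathcal{D}_2$, type $(1,1,0)$ or $(1,2,0)$, so $\gen(H)=*1$ or $*2$; and $d(H)\geq 3$ gives $\Phi(H)\in\mathcal{D}_{d(H)}$ with first nim-value $1$, so $\gen(H)=*1$.

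The main obstacle is making the downward induction on deficiency rigorous: Proposition~\ref{prop:setoptions} controls which $\mathcal{D}_k$ the options live in, but within a fixed $\mathcal{D}_m$ the option relation among same-deficiency classes must itself be well-founded (it is, since options strictly increase $\lceil\cdot\rceil$, hence strictly shrink as subgroups), so an induction along that order is valid; one must simultaneously prove the type is constant on each $\mathcal{D}_m$ (for $m\geq 1$) so that $A_I,B_I$ are forced. The other delicate point is the base of the claim for $\mathcal{D}_2$: one must confirm that no class in $\mathcal{D}_2$ has an option of type $(1,0,0)$ (that would require a $\mathcal{D}_0$ option, impossible since options drop deficiency by at most $1$), so the only option types available are $(1,1,0)$ and $(1,2,0)$, pinning the type to $(1,2,0)$. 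I expect these to be exactly the computations the authors carry out in the paragraphs following this statement, and I would present them as a sequence of short lemmas: "every class in $\mathcal{D}_1$ has type $(1,1,0)$", "every class in $\mathcal{D}_2$ has type $(1,2,0)$", and "every class in $\mathcal{D}_m$ for $m\geq 3$ has type $(1,1,0)$", from which Proposition~\ref{prop:OrderGIsOdd} is immediate by reading off the second coordinate of $\type(X_{\Phi(H)})$.
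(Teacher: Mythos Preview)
Your overall plan---organize by deficiency, use Proposition~\ref{prop:setoptions} to constrain options, and read off $\gen(H)$ from the second coordinate of $\type(X_{\Phi(H)})$---is exactly the paper's approach. The gap is a straightforward computational error at the $\mathcal{D}_1$ level that propagates.

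For $X_I\in\mathcal{D}_1$ the guaranteed option is $X_G$ with type $(1,0,0)$. With the odd-parity formula you must first compute $b=\mex(A_I)$. If the only option type is $(1,0,0)$ then $A_I=\{0\}$ and $b=\mex(\{0\})=1$, then $a=\mex(B_I\cup\{b\})=\mex(\{0,1\})=2$. So the type is $(1,2,1)$, not $(1,1,0)$. One then checks that adding further $\mathcal{D}_1$-options of type $(1,2,1)$ does not change this: $A_I\subseteq\{0,2\}$ still has $\mex$ equal to $1$, and $B_I\subseteq\{0,1\}$ still forces $a=2$. Your asserted type $(1,1,0)$ for $\mathcal{D}_1$ is incompatible with any computation starting from $0\in A_I$, since then $b\neq 0$.

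This error matters for the conclusion: with $d(H)=1$ you would get $\gen(H)=*1$, contradicting the statement. It also makes your $\mathcal{D}_2$ step inconsistent: feeding the (wrong) option type $(1,1,0)$ into the recursion for $\mathcal{D}_2$ gives $A_I\ni 1$, $b=\mex(A_I)=0$, $a=\mex(\{0\}\cup\{0\})=1$, i.e.\ type $(1,1,0)$ again, not the $(1,2,0)$ you claim. Once you correct $\mathcal{D}_1$ to $(1,2,1)$, the rest of your outline runs exactly as in the paper: $\mathcal{D}_2$ forces $(1,2,0)$, $\mathcal{D}_3$ forces $(1,1,0)$, and $\mathcal{D}_{\geq 4}$ stabilizes at $(1,1,0)$; reading off second coordinates gives the proposition.
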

\begin{proof}
The case where $|H|=1$ was done in Section~\ref{section:preliminaries}.  We proceed by structural induction on the structure classes to show that
\[
           \type(X_I) =
             \begin{dcases}
               (1,0,0), &  \text{if } X_I \in \mathcal{O}_0\\
               (1,2,1), &  \text{if } X_I \in \mathcal{O}_1\\
               (1,2,0), &  \text{if } X_I \in \mathcal{O}_2\\
               (1,1,0), &  \text{if } X_I \in \mathcal{O}_{\geq 3}.
             \end{dcases}
\]

\noindent
Every structure class in $\mathcal{O}_0$ is terminal, so $\type(X_I)=(1,0,0)$ if $X_I \in \mathcal{O}_0$.  If $X_I\in\mathcal{O}_1$, then $\{(1,0,0)\}\subseteq\otype(X_I)\subseteq\{(1,0,0),(1,2,1)\}$ by induction and Proposition~\ref{prop:setoptions}, which implies $\type(X_I)=(1,2,1)$.  Similarly, if $X_I \in \mathcal{O}_2$, then $\{(1,2,1)\}\subseteq\otype(X_I)\subseteq\{(1,2,0),(1,2,1)\}$, and so $\type(X_I)=(1,2,0)$. Again, if $X_I \in \mathcal{O}_3$, then $\{(1,2,0)\}\subseteq\otype(X_I)\subseteq\{(1,1,0),(1,2,0))\}$, and hence $\type(X_I)=(1,1,0)$.  Now if $X_I \in \mathcal{O}_{\geq 4}$, then $\otype(X_I)=\{(1,1,0)\}$ by induction, so $\type(X_I)=(1,1,0)$.  
 
Since $X_{\Phi(H)} \in \mathcal{O}_{d(H)}$ by~\cite[Proposition~3.7]{BeneshErnstSiebenGeneralizedDihedral}, the result follows from the fact that $\gen(H)$ equals the second component of $\type(X_{\Phi(H)})$.
\end{proof}

If $T$ is nontrivial, then we handle four cases in increasing complexity:  $d(T \times H)=1$, $d(T \times H) \geq 4$, $d(T \times H)=3$, and $d(T \times H)=2$.

\begin{proposition}\cite[Corollary~6.9]{ErnstSieben}
\label{prop:dIs1}
If $T$ is a nontrivial $2$-group and $H$ is a group of odd order such that $d(T \times H)=1$, then
\[
\gen(T \times H)=
\begin{dcases}
*1,          &  T \times H \cong \mathbb{Z}_{4k} \text{ for some } k \geq 1 \\
*2,          & T \times H \cong \mathbb{Z}_{2}  \\
*4,         & T \times H \cong \mathbb{Z}_{4k+2} \text{ for some } k \geq 1 .
\end{dcases}
\]
\end{proposition}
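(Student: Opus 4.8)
The plan is to reduce to cyclic groups and then run a structure‑class computation in the style of Proposition~\ref{prop:OrderGIsOdd}. Since $\max\{d(T),d(H)\}\le d(T\times H)=1$ and $T$ is nontrivial, both $T$ and $H$ must be cyclic (with $H$ possibly trivial); hence $T\cong\mathbb Z_{2^n}$ for some $n\ge1$, $H\cong\mathbb Z_m$ for some odd $m$, and $T\times H\cong\mathbb Z_{2^n m}$ because $\gcd(2^n,m)=1$. A short check shows that the three cases in the statement are precisely $n\ge2$, $(n,m)=(1,1)$, and $n=1<m$, so it remains to compute $\gen(\mathbb Z_N)$ for even $N$; this is \cite[Corollary~6.9]{ErnstSieben}, whose proof I would reconstruct as follows.

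Write $N=2^a m$ with $a\ge1$ and $m$ odd, and let $p_1,\dots,p_r$ be the prime divisors of $N$. For $S\subseteq\{p_1,\dots,p_r\}$ let $I_S$ be the unique subgroup of $\mathbb Z_N$ of index $\prod_{p\in S}p$; then the maximal subgroups are the $I_{\{p_i\}}$, we have $I_\emptyset=\mathbb Z_N$ and $I_{\{p_1,\dots,p_r\}}=\Phi(\mathbb Z_N)$, and $S\mapsto I_S$ is an inclusion‑reversing bijection from the power set of $\{p_1,\dots,p_r\}$ onto $\mathcal J$. I would then record three facts: (i) each quotient $\mathbb Z_N/I_S$ is cyclic of squarefree order, so $\delta_{\mathbb Z_N}(I_S)=1$ for $S\ne\emptyset$ and $\delta_{\mathbb Z_N}(I_\emptyset)=0$, whence $\mathcal D_0=\{X_{\mathbb Z_N}\}$ and $X_{I_S}\in\mathcal D_1$ for all $S\ne\emptyset$; (ii) $|I_S|=N/\prod_{p\in S}p$, so $X_{I_S}$ is odd exactly when $a=1$ and $2\in S$; (iii) adjoining one element to a position in $X_{I_S}$ produces a position in $X_{I_T}$ for some $T\subseteq S$, and every such $T$ occurs, so $\opt(X_{I_S})=\{X_{I_T}\mid T\subseteq S\}$.

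Finally I would run the structural induction, processing the classes in order of increasing $|S|$: when $X_{I_S}$ is reached, every option other than $X_{I_S}$ itself has strictly smaller $|S|$ and hence a known type, at least one option lies in $\mathcal D_0$ by Proposition~\ref{prop:setoptions}, and — just as in the proof of Proposition~\ref{prop:OrderGIsOdd} — the self‑option $X_{I_S}\in\opt(X_{I_S})$ causes no trouble because the $\mex$‑recursion returns the same value for every option‑type set consistent with Proposition~\ref{prop:setoptions}. This should give $\type(X_{\mathbb Z_N})=(0,0,0)$ and: if $a\ge2$, every $X_{I_S}$ is even with $\type(X_{I_S})=(0,1,2)$ for $S\ne\emptyset$; if $a=1$, then $\type(X_{I_S})=(0,1,2)$ for $\emptyset\ne S\not\ni2$, $\type(X_{I_{\{2\}}})=(1,2,1)$, and $\type(X_{I_S})=(1,4,3)$ for $2\in S$ with $|S|\ge2$. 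Since $\Phi(\mathbb Z_N)=I_{\{p_1,\dots,p_r\}}\ne\mathbb Z_N$ lies in $\mathcal D_1$ and $\gen(\mathbb Z_N)$ is the second component of $\type(X_{\Phi(\mathbb Z_N)})$, this yields $*1$ when $a\ge2$, $*2$ when $a=1$ and $r=1$ (that is, $N=2$, so $\Phi(\mathbb Z_N)=I_{\{2\}}$), and $*4$ when $a=1$ and $r\ge2$ (that is, $N\equiv2\pmod4$ with $N>2$). The main obstacle is the bookkeeping in this last step: one must track parities carefully (the value jumps between $a=1$ and $a\ge2$, and again between $r=1$ and $r\ge2$), control the ubiquitous self‑options, and recognize that the split between $\gen(\mathbb Z_2)=*2$ and $\gen(\mathbb Z_{4k+2})=*4$ is exactly whether $\Phi(\mathbb Z_N)$ is the singleton intersection subgroup $I_{\{2\}}$ or an intersection over at least two primes.
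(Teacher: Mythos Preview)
Your argument is correct. The reduction to cyclic groups is immediate from $\max\{d(T),d(H)\}\le d(T\times H)=1$, the description of $\mathcal J$ for $\mathbb Z_N$ via subsets of prime divisors is accurate, and the type computations check out in all three regimes (in particular, for $a=1$ and $|S|\ge2$ with $2\in S$ one gets $A_I\supseteq\{0,1,2\}$ and $B_I\supseteq\{0,1,2\}$, forcing $b=3$ and $a=4$ regardless of whether the inductive value $(1,4,3)$ is already present).

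The comparison with the paper is simply that the paper does not prove this proposition at all: it is quoted verbatim as \cite[Corollary~6.9]{ErnstSieben} and used as a black box. Your reconstruction is therefore strictly more than what the paper provides, and it is carried out in the same structure-class style used elsewhere in the paper (e.g.\ Proposition~\ref{prop:OrderGIsOdd}). One minor efficiency you could exploit: once you observe that every nonterminal $X_{I_S}$ lies in $\mathcal D_1$ and that for $a\ge2$ they are all even, Proposition~\ref{prop:GENeventypes} dispatches that case instantly without any induction; only the $a=1$ case needs the explicit $\mex$ bookkeeping you carry out.
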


\begin{proposition}\cite[Corollary~3.11]{BeneshErnstSiebenGeneralizedDihedral}
\label{prop:dIsLarge}
If $|G|$ is even and $d(G) \geq 4$, then $\gen(G)=*0$. 
\end{proposition}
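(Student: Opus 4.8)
The plan is to compute $\nim(\gen(G))$, which by Section~\ref{section:preliminaries} is the second component of $\type(X_{\Phi(G)})$, and to show that it equals $0$. Since $|G|$ is even, $G$ is nontrivial, and Corollary~\ref{cor:deltaofsubsets} gives $X_{\Phi(G)}\in\mathcal{D}_{d(G)}$ with $d(G)\geq 4$, so the whole argument will only ever need information about structure classes of deficiency at least $3$. Two further consequences of $|G|$ being even are used. First, $\type(X_G)=(0,0,0)$. Second, by Cauchy's theorem $G$ has an element $g$ of order $2$; if $X_I$ is an odd structure class then $g\notin I$ (otherwise $|I|$ would be even), so for any $P\in X_I$ the position $P\cup\{g\}$ lies in the \emph{even} structure class $X_{\lceil I,g\rceil}$. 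Thus every odd structure class has an even option.

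The heart of the proof is a structural induction, ordered by deficiency and then by $|I|$ within a deficiency level as in~\cite{ErnstSieben}, using Proposition~\ref{prop:setoptions} and the $\mex$ recursion, to show that the even structure classes have the types
\[
\type(X_I)=
\begin{dcases}
(0,0,0), & X_I\in\mathcal{E}_0,\\
(0,1,2), & X_I\in\mathcal{E}_1,\\
(0,0,2), & X_I\in\mathcal{E}_2,\\
(0,0,1), & X_I\in\mathcal{E}_{\geq 3}.
\end{dcases}
\]
By Proposition~\ref{prop:setoptions} the options of a class in $\mathcal{E}_m$ are all even and lie in $\mathcal{E}_m\cup\mathcal{E}_{m-1}$, with at least one option in $\mathcal{E}_{m-1}$, and when $m=1$ the class $X_G$ is among the options. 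In each case the inductive hypothesis restricts $A_I$ and $B_I$ tightly enough that the $\mex$ recursion forces the asserted triple regardless of which of the permitted option types actually occur; the possibility $X_I\in\opt(X_I)$ causes no trouble, since the recursion already adjoins $a$ (resp.\ $b$) to the set being mexed. In particular, every even class of deficiency at least $3$ has type $(0,0,1)$, and every even class of deficiency at least $2$ has a type whose second component is $0$.

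Combining these facts, let $X_I$ be an odd structure class of deficiency $m\geq 3$. Its even option, being an option of $X_I\in\mathcal{D}_m$, lies in $\mathcal{D}_m\cup\mathcal{D}_{m-1}$ by Proposition~\ref{prop:setoptions} and is even, hence lies in $\mathcal{E}_m\cup\mathcal{E}_{m-1}\subseteq\mathcal{E}_{\geq 2}$, so $0\in A_I$; since $|I|$ is odd, the $\mex$ recursion makes the third component of $\type(X_I)$ equal to $\mex(A_I)\neq 0$. Therefore every structure class of deficiency at least $3$ has a type with nonzero third component (for even classes this is immediate from the table). Now $X_{\Phi(G)}\in\mathcal{D}_{d(G)}$ with $d(G)\geq 4$. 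If $\Phi(G)$ has even order, then $X_{\Phi(G)}\in\mathcal{E}_{\geq 3}$, so $\type(X_{\Phi(G)})=(0,0,1)$ and $\nim(\gen(G))=0$. If $\Phi(G)$ has odd order, then $X_{\Phi(G)}$ is odd of deficiency at least $3$, so $b:=\mex(A_{\Phi(G)})\neq 0$ by the previous sentence; moreover every option of $X_{\Phi(G)}$ lies in $\mathcal{D}_{d(G)}\cup\mathcal{D}_{d(G)-1}\subseteq\mathcal{D}_{\geq 3}$, so no option type has third component $0$, i.e.\ $0\notin B_{\Phi(G)}$. Hence the second component of $\type(X_{\Phi(G)})$ is $\mex(B_{\Phi(G)}\cup\{b\})=0$. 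In either case $\gen(G)=*0$.

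The only delicate points are keeping the structural induction well founded in the presence of self-options and carrying out the slightly irregular base cases $\mathcal{E}_1$ and $\mathcal{E}_2$ correctly; the remainder is the short block of $\mex$ computations for the even-class table, together with the bookkeeping needed to invoke the hypotheses $|G|$ even (for $\type(X_G)$, for Lagrange's theorem, and for Cauchy's theorem) and $d(G)\geq 4$ (to keep every relevant option at deficiency at least $3$) exactly where they are required.
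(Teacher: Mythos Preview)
The paper does not prove this proposition; it merely cites it as \cite[Corollary~3.11]{BeneshErnstSiebenGeneralizedDihedral}. So there is no in-paper argument to compare against, and your task was really to supply the omitted proof.

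Your argument is correct. You first establish the table of types for the even structure classes (which is precisely Proposition~\ref{prop:GENeventypes}, also quoted without proof from the same reference), and then handle the possibility that $\Phi(G)$ has odd order by the Cauchy-element trick: any odd $X_I$ with $\delta_G(I)\geq 3$ has an even option in $\mathcal{E}_{\geq 2}$, forcing $0\in A_I$ and hence $b=\mex(A_I)\neq 0$; since $d(G)\geq 4$ keeps every option of $X_{\Phi(G)}$ at deficiency $\geq 3$, all third components of option types are nonzero, whence $0\notin B_{\Phi(G)}\cup\{b\}$ and the second component of $\type(X_{\Phi(G)})$ is $0$. This is exactly the kind of argument one expects the cited corollary to rest on, and indeed in the source it is stated immediately after the result you reproved as your even-class table. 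One small wording issue: the structural induction on structure classes should be phrased as induction along the option relation (equivalently, on positions by size), not ``by $|I|$ within a deficiency level'' in the increasing direction, since proper options of $X_I$ at the same deficiency have $|J|>|I|$; your parenthetical about self-options shows you understand the well-foundedness issue, but the ordering you name points the wrong way.
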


The following result will be useful in the case where $d(T \times H) \geq 2$.

\begin{proposition}\cite[Proposition~3.10]{BeneshErnstSiebenGeneralizedDihedral}\label{prop:GENeventypes}
If $G$ is a group of even order, then
\[
           \type(X_I) =
             \begin{dcases}
               (0,0,0), &  X_I \in \mathcal{E}_0\\
               (0,1,2), &  X_I \in \mathcal{E}_1\\
               (0,0,2), &  X_I \in \mathcal{E}_2\\
               (0,0,1), &  X_I \in \mathcal{E}_{\geq 3}.
             \end{dcases}
\]
\end{proposition}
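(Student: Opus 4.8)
The plan is to prove the displayed formula by structural induction on the set $\mathcal{Y}$ of structure classes, ordered by the option relation; this is legitimate because $G$ is finite, so the option relation on $\mathcal{Y}$ is well-founded, and at each class I would apply the $\mex$-recursion of the Proposition in Section~\ref{section:preliminaries}. The structural fact that makes the argument run smoothly is the ``moreover'' clause of Proposition~\ref{prop:setoptions}: if $X_I\in\mathcal{E}_m$, then $X_I$ has an option in $\mathcal{E}_{m-1}$ and every option of $X_I$ lies in $\mathcal{E}_m\cup\mathcal{E}_{m-1}$. In particular every option of an even structure class is itself even, so in computing $\type(X_I)$ for $X_I$ even the only option types that can occur are, by the induction hypothesis, among the four values $(0,0,0)$, $(0,1,2)$, $(0,0,2)$, $(0,0,1)$ appearing in the statement; crucially, we never have to know the type of an odd structure class. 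For the base case, $\mathcal{D}_0=\{X_G\}$ and $X_G$ is even because $|G|$ is even, so $\mathcal{E}_0=\{X_G\}$, which is terminal and therefore has $\type(X_G)=(0,0,0)$.

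For the inductive step I would fix $X_I\in\mathcal{E}_m$ with $m\ge 1$ and assume the formula for all options of $X_I$ (all of which are even). Proposition~\ref{prop:setoptions} forces $\otype(X_I)$ to be a nonempty subset of an explicit two-element set and, for $m\le 2$, to contain a prescribed element; the even-case recursion $a=\mex(B_I)$, $b=\mex(A_I\cup\{a\})$ then disposes of each case by a one-line $\mex$ computation. When $m=1$ we have $(0,0,0)\in\otype(X_I)\subseteq\{(0,0,0),(0,1,2)\}$, so $B_I\subseteq\{0,2\}$ with $0\in B_I$ gives $a=1$, and $0\in A_I$ gives $b=\mex(A_I\cup\{1\})=2$, hence $\type(X_I)=(0,1,2)$. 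When $m=2$ we have $(0,1,2)\in\otype(X_I)\subseteq\{(0,1,2),(0,0,2)\}$, so $B_I=\{2\}$ gives $a=0$, and $1\in A_I\subseteq\{0,1\}$ gives $b=\mex(A_I\cup\{0\})=2$, hence $\type(X_I)=(0,0,2)$. When $m=3$ we have $\emptyset\ne\otype(X_I)\subseteq\{(0,0,2),(0,0,1)\}$, so $A_I=\{0\}$ and $B_I\subseteq\{1,2\}$, giving $a=0$ and $b=1$, hence $\type(X_I)=(0,0,1)$. When $m\ge 4$ every option lies in $\mathcal{E}_{\ge 3}$, so $\otype(X_I)=\{(0,0,1)\}$, giving $a=\mex(\{1\})=0$ and $b=\mex(\{0\})=1$, hence $\type(X_I)=(0,0,1)$. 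This completes the induction.

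I do not expect a serious obstacle here: once the ``moreover'' clause of Proposition~\ref{prop:setoptions} is invoked, the whole proof is a short, bounded $\mex$ computation. The two points that need a little care are, first, that in the cases $m\ge 3$ an option of $X_I$ may itself lie in $\mathcal{E}_{\ge 3}$ --- this is harmless, since the option relation is strictly well-founded and the induction hypothesis applies to an option irrespective of its deficiency --- and second, the reliance on even structure classes having only even options, which is precisely what lets us sidestep analyzing odd structure classes in a group of even order (a genuinely more intricate task, as the $|H|$-odd analysis in Proposition~\ref{prop:OrderGIsOdd} illustrates, where the structure classes are all odd and the resulting types are different).
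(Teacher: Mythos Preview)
The present paper does not prove this proposition; it is quoted without proof from \cite[Proposition~3.10]{BeneshErnstSiebenGeneralizedDihedral}. Your argument is correct and is exactly the standard structural-induction template the paper itself uses for the analogous odd-order case in Proposition~\ref{prop:OrderGIsOdd}: invoke the ``moreover'' clause of Proposition~\ref{prop:setoptions} to confine options to $\mathcal{E}_{m}\cup\mathcal{E}_{m-1}$, then dispatch each deficiency level by a short $\mex$ computation. There is nothing to compare against in this paper, but your proof is almost certainly the one given in the cited source.
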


\begin{proposition}\label{prop:dgt3}
If $T$ is a nontrivial $2$-group and $H$ is a group of odd order such that $d(T \times H)= 3$, then $\gen(T \times H)=*0$.
\end{proposition}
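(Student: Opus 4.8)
The plan is to compute $\type(X_I)$ for every structure class $X_I$ in $\mathcal{Y}$ by structural induction, organized by deficiency, and then read off $\gen(T\times H)$ as the second component of $\type(X_{\Phi(T\times H)})$. Since $|T|$ is even and nontrivial, $|T\times H|$ is even, so Proposition~\ref{prop:GENeventypes} already pins down the types of all \emph{even} structure classes: $(0,0,0),(0,1,2),(0,0,2),(0,0,1)$ for $\mathcal{E}_0,\mathcal{E}_1,\mathcal{E}_2,\mathcal{E}_{\geq 3}$ respectively. The work therefore reduces to determining the types of the odd structure classes. The guess to verify by induction is that $\type(X_I)=(1,0,0)$ for $X_I\in\mathcal{O}_0$ (terminal), and then something like $(1,2,0)$, $(1,2,0)$ or $(1,0,0)$, and $(1,0,0)$ for $\mathcal{O}_1,\mathcal{O}_2,\mathcal{O}_{\geq 3}$ — the precise values to be pinned down by running the mex recursion of the boxed Proposition in Section~\ref{section:preliminaries}, using that an odd class $X_I\in\mathcal{O}_m$ has all options in $\mathcal{O}_{m-1}\cup\mathcal{E}_{m-1}\cup\mathcal{O}_m\cup\mathcal{E}_m$ (via Proposition~\ref{prop:setoptions}), and by Proposition~\ref{prop:setoptions} has at least one option in $\mathcal{D}_{m-1}$.

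First I would handle the base cases $\mathcal{O}_0=\{X_{T\times H}\}$ (type $(1,0,0)$) and the classes of deficiency $1$ and $2$: for these I need to know which of the forced options (even vs.\ odd, which deficiency) actually occur, so I would argue that a deficiency-$m$ odd class always has \emph{both} an even and an odd option of deficiency $m-1$, or at least enough options to force the mex values claimed. The structure here is exactly parallel to the inductive computations in Proposition~\ref{prop:OrderGIsOdd} and Proposition~\ref{prop:GENeventypes}, with the extra wrinkle that odd and even classes are now interleaved. Once the types for $\mathcal{O}_{\leq 2}$ and all of $\mathcal{E}$ are known, the inductive step for $\mathcal{O}_{\geq 3}$ should collapse to a single type (I expect $(1,0,0)$), because the option types available are a bounded set and the mex stabilizes. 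Finally, since $d(T\times H)=3$, we have $X_{\Phi(T\times H)}\in\mathcal{D}_3$ by Corollary~\ref{cor:deltaofsubsets}; if $|\Phi(T\times H)|$ is even then $X_{\Phi(T\times H)}\in\mathcal{E}_{\geq 3}$ has type $(0,0,1)$ with second component $0$, and if odd then $X_{\Phi(T\times H)}\in\mathcal{O}_{\geq 3}$ with (conjecturally) second component $0$ as well — either way $\gen(T\times H)=*0$.

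The main obstacle I anticipate is the bookkeeping needed to guarantee that each odd class of deficiency $m\in\{1,2,3\}$ sees the specific option types required to force the claimed mex values — in particular, ensuring an odd class has an \emph{odd} option of the next-lower deficiency and not merely an even one, since the recursion treats $A_I$ and $B_I$ asymmetrically. This is where I would need a lemma, presumably exploiting the $2$-group $\times$ odd-order structure: adjoining a single well-chosen element to a subset can be arranged to change deficiency by one while controlling the parity of $|\lceil P,g\rceil|$. I would look to adapt the option-existence arguments already used to prove Proposition~\ref{prop:setoptions} in \cite{BeneshErnstSiebenGeneralizedDihedral}, together with the explicit description of intersection subgroups of $T\times H$ (they are products of an intersection subgroup of $T$ with one of $H$, since $\gcd(|T|,|H|)=1$), to get the parity control for free. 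Everything else is a finite mex computation that I would present as a short case table rather than grinding through by hand.
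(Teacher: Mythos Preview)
Your approach is genuinely different from the paper's, and the obstacle you anticipate is more serious than mere bookkeeping.

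The paper does \emph{not} compute the types of the odd structure classes. Instead it gives a two-move strategy argument: whatever element $g$ the first player chooses, the resulting position $\{g\}$ lies in some $X_{\lceil g\rceil}\in\mathcal{D}_{\ge 2}$ (since $d(G)=3$). If $X_{\lceil g\rceil}\in\mathcal{E}_{\ge 2}$, the second player replies with the identity, keeping the position in the same even class; if $X_{\lceil g\rceil}\in\mathcal{O}_{\ge 2}$ then $g=(e,h)$, and the second player replies with $(t,e)$ for an involution $t\in T$, landing in $X_{\lceil(t,h)\rceil}\in\mathcal{E}_{\ge 2}$ via Lemma~\ref{lem:relatinggeneratingsets}. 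Either way the position after the second move is an even-sized set in a class of type $(0,0,2)$ or $(0,0,1)$ by Proposition~\ref{prop:GENeventypes}, hence has nim-number $0$. No inductive computation of odd types is needed at all.

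Your inductive plan runs into a real problem: the types on $\mathcal{O}_m$ are \emph{not} uniform in general. First, $\mathcal{O}_0=\emptyset$ here since $|G|$ is even, so your base case is misplaced. More substantively, take $T=\mathbb{Z}_2$ (forcing $d(H)=3$). Then the odd intersection subgroups are exactly $\{0\}\times K$, and one checks that $X_{\{0\}\times H}\in\mathcal{O}_1$ has $\otype=\{(0,0,0)\}$ and hence type $(1,2,1)$, whereas $X_{\{0\}\times K}\in\mathcal{O}_1$ with $K\in\mathcal{I}_H$ and $\delta_H(K)=1$ has $X_{\mathbb{Z}_2\times K}\in\mathcal{E}_1$ as an option (via $(1,e)$), so its option type contains both $(0,0,0)$ and $(0,1,2)$ and its type is $(1,1,2)$. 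Thus $\mathcal{O}_1$ already splits into at least two type-subclasses, exactly as $\mathcal{O}_2$ does in Proposition~\ref{prop:Z2Z2two}. Pushing your induction through would require a case split on $d(T)$ (equivalently on whether $T$ is $\mathbb{Z}_2$, $\mathbb{Z}_2^2$, $\mathbb{Z}_2^3$, or has nontrivial Frattini subgroup) together with a subdivision of each $\mathcal{O}_m$; this is substantially more work than the paper's two-line strategy, and your proposal does not supply the missing subdivision.
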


\begin{proof}
Let $g$ be the element the first player initially selects, so the game position is $\{g\} \in X_{\lceil g \rceil}$. If $X_{\lceil g \rceil}\in\mathcal{E}_{\ge 2}$, then the second player selects the identity $e$ and keeps the resulting game position $\{g,e\}$ in $X_{\lceil g,e \rceil}=X_{\lceil g \rceil}$. 

Otherwise, $X_{\lceil g \rceil}\in\mathcal{O}_{\ge 2}$, so $g$ has odd order and can be written as $g=(e,h)$ for some $h \in H$.  In this case, the second player selects $(t,e)$ for some involution $t \in T$.  Then the resulting position $\{(e,h), (t,e)\}$ is in $X_{\lceil (e,h),(t,e) \rceil}=X_{\lceil (t,h) \rceil} \in \mathcal{E}_{\ge 2}$. 

In both cases the position after the second move has nim-number $0$ since it is in a structure class with type $(0,0,2)$ or $(0,0,1)$ by Proposition~\ref{prop:GENeventypes}. Thus, the second player wins. 
\end{proof}

Lastly, we consider the case where $d(T \times H)=2$. First, we handle the subcase when $\Phi(T)$ is nontrivial.  

\begin{proposition}
\label{PhiTriv}
If $T$ is a $2$-group and $H$ is a group of odd order such that $d(T \times H)=2$ and $\Phi(T)$ is nontrivial, then $\gen(T \times H)=*0$.
\end{proposition}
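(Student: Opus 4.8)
The strategy I would take mirrors the proof of Proposition~\ref{prop:dgt3}: exhibit an explicit winning strategy for the second player by showing that, no matter what element $g$ the first player picks, the second player can reply so that the resulting two-element position lies in a structure class of type $(0,0,2)$ or $(0,0,1)$ — that is, in $\mathcal{E}_2$ or $\mathcal{E}_{\geq 3}$ — whence its nim-number is $0$ by Proposition~\ref{prop:GENeventypes}, and the second player wins. Since $\Phi(T)$ is nontrivial, $T$ is not elementary abelian; this is the structural input we must exploit. The key point should be that $T$ contains a central involution $t$ lying in $\Phi(T)$: indeed $\Phi(T)$ is a nontrivial normal subgroup of the $2$-group $T$, so it meets the center $Z(T)$ nontrivially, and any nontrivial element of $\Phi(T)\cap Z(T)$ of order $2$ (take an appropriate power) is a central involution in the Frattini subgroup. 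Because $t\in\Phi(T)$, appending $t$ to any subset does not change which maximal subgroups of $T$ contain it, and similarly $(t,e)\in\Phi(T)\times\{e\}\subseteq\Phi(T\times H)$ does not change $\lceil\cdot\rceil$ in $T\times H$.

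First I would set $G = T\times H$ and note $d(G)=2$, so $\mathcal{D}_0=\{X_G\}$, $\mathcal{D}_1$ consists of structure classes with $\delta_G=1$, and $\mathcal{D}_2$ is everything of deficiency $2$, with $X_{\Phi(G)}\in\mathcal{E}_2$. The second player wants the position after move two to be in $\mathcal{E}_{\geq 2}$. Let $g$ be the first player's choice; the position is $\{g\}\in X_{\lceil g\rceil}$, which by Proposition~\ref{prop:setoptions} lies in $\mathcal{D}_1\cup\mathcal{D}_2$ (it cannot be $X_G$). The natural case split is on the parity of $X_{\lceil g\rceil}$. If $X_{\lceil g\rceil}$ is even, the second player plays $e$, keeping the position in $X_{\lceil g,e\rceil}=X_{\lceil g\rceil}\in\mathcal{E}_{\geq 1}$; but we must rule out $\mathcal{E}_1$. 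Here I would argue that if $\lceil g\rceil\in\mathcal{D}_1$ then $g$ together with one more element generates $G$, and then show — using that $G=T\times H$ with $H$ of odd order and $T$ a $2$-group — that a single even-order or even... more carefully: a deficiency-one even structure class would force $\{g\}$ to extend to a generating set by one element, and I'd need to check this contradicts $d(G)=2$ combined with the parity constraint (since $|X_I|$ even means $|I|$ even, but $|\Phi(G)|=|\Phi(T)|\cdot|\Phi(H)|$ could be even...). The cleaner route: if $X_{\lceil g\rceil}\in\mathcal{E}_1$, the second player instead plays a well-chosen element to drop into $\mathcal{E}_{\geq 2}$ is impossible by Proposition~\ref{prop:setoptions} (options of $\mathcal{E}_1$ lie in $\mathcal{E}_0\cup\mathcal{E}_1$), so one must show $X_{\lceil g\rceil}\notin\mathcal{E}_1$ directly — a single element $g$ with $\lceil g\rceil$ of even parity and deficiency $1$ would generate $G$ with one more element, and I claim the parity of $\lceil g\rceil$ then cannot be even unless $\lceil g\rceil=\Phi(G)$... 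I would pin this down using that $\pi_1(g)$ must generate $T/\Phi(T)$-worth together with one element, forcing constraints incompatible with $\Phi(T)\neq 1$.

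If $X_{\lceil g\rceil}$ is odd, then $|\lceil g\rceil|$ is odd, so $g$ has odd order and $g=(e,h)$ for some $h\in H$ (its $T$-component lies in a subgroup of odd order in the $2$-group $T$, hence is trivial). The second player then plays $(t,e)$ where $t$ is the central involution in $\Phi(T)$. The resulting position $\{(e,h),(t,e)\}$ lies in $X_{\lceil(e,h),(t,e)\rceil}$; I would compute $\lceil(e,h),(t,e)\rceil = \lceil(t,h)\rceil$ — because $(t,e),(e,h)$ and $(t,h)$ generate the same subgroup modulo... wait, $\langle(e,h),(t,e)\rangle = \langle t\rangle\times\langle h\rangle \ni (t,h)$ and conversely, but $\lceil\cdot\rceil$ is about maximal subgroups, and a maximal subgroup $M$ of $T\times H$ contains $\{(e,h),(t,e)\}$ iff it contains $(t,h)$ since $t\in\Phi(T)$ means $t$ is in every maximal subgroup of $T$ hence $(t,e)$ is in every maximal subgroup of $T\times H$ that has full $H$-component — here I need the standard description of maximal subgroups of a direct product, which I would invoke. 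The outcome $X_{\lceil(t,h)\rceil}$ should lie in $\mathcal{E}_{\geq 2}$: it is even since $|\lceil(t,h)\rceil|$ divides $|\langle(t,h)\rangle|\cdot|\Phi(G)|$-type quantities and contains the involution... and its deficiency is $\geq 2$ because $(t,h)$ is a single element of a group with $d(G)=2$ and $\langle(t,h)\rangle\neq G$.

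**Main obstacle.** The hard part will be the even case: ruling out $X_{\lceil g\rceil}\in\mathcal{E}_1$, i.e.\ showing no single element $g$ of $T\times H$ has $\lceil g\rceil$ of even parity and deficiency exactly $1$ when $d(T\times H)=2$ and $\Phi(T)\neq 1$. This requires understanding exactly which subsets of $G$ have deficiency $1$ and even-parity $\lceil\cdot\rceil$, and leveraging the hypothesis $\Phi(T)\neq 1$ (equivalently $T$ not elementary abelian) to derive a contradiction — presumably via the projection $\pi_1$ and Lemma~\ref{lem:FactorDeficiencies}: $\delta_G(\{g\})\geq\delta_T(\{\pi_1(g)\})$, and if $\Phi(T)\neq 1$ then $d(T)\geq 2$ forces... one needs that a single element of $T$ has deficiency $\geq 2$ unless... no, $T=\mathbb{Z}_4$ has $d=1$. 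So $\Phi(T)\neq 1$ does not immediately bound $\delta_T$ of a singleton. The real leverage must combine the $2$-group structure with $d(T\times H)=2$: the point is that any generating pair of $T\times H$ must have both components generating, and I would carefully enumerate the possible structure classes of deficiency $1$ and check parities, possibly splitting further according to whether $\pi_1(g)$ generates $T$ or only a proper subgroup — that bookkeeping, and getting the parity statement exactly right, is where the work lies.
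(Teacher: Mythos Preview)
Your approach is far more complicated than necessary, and you have missed the one-line observation that makes this proposition immediate.

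The hypothesis $\Phi(T)\neq 1$ is there for exactly one reason: since $T$ is a $2$-group, $\Phi(T)$ nontrivial means $|\Phi(T)|$ is even, and since $\Phi(T\times H)\cong\Phi(T)\times\Phi(H)$, the Frattini subgroup of $G=T\times H$ itself has even order. Combined with $d(G)=2$, this puts $X_{\Phi(G)}\in\mathcal{E}_2$. Proposition~\ref{prop:GENeventypes} then reads off $\type(X_{\Phi(G)})=(0,0,2)$, so the nim-number of $\emptyset$ (the second component) is $0$. That is the entire proof.

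You actually came within an inch of this: you noted that a central involution $t\in\Phi(T)$ gives $(t,e)\in\Phi(G)$, but you used it only to engineer a second-player response in the odd case, never noticing that it already forces $|\Phi(G)|$ even. Once $X_{\Phi(G)}\in\mathcal{E}$, every intersection subgroup (hence every $\lceil g\rceil$) contains $\Phi(G)$ and so has even order; your entire ``odd case'' is vacuous. There is no need to describe a move-by-move strategy at all.

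Regarding your self-identified ``main obstacle'' (ruling out $X_{\lceil g\rceil}\in\mathcal{E}_1$): this is not an obstacle, and in fact it is not even true in general. Take $T=\mathbb{Z}_4$ and $H$ a nontrivial cyclic group of odd order; then $d(T\times H)=2$, $\Phi(T)\neq 1$, and a generator $g$ of the cyclic factor $\mathbb{Z}_4\times\{e\}$ satisfies $\delta_G(\{g\})=1$ with $\lceil g\rceil$ of even order, so $X_{\lceil g\rceil}\in\mathcal{E}_1$. Your proposed argument to rule this out could not have succeeded. Within a strategy-based proof the correct handling is trivial anyway: if $\delta_G(\{g\})=1$ then by definition the second player can pick $h$ with $\langle g,h\rangle=G$ and win on the spot. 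But again, none of this casework is needed once you observe $X_{\Phi(G)}\in\mathcal{E}_2$.
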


\begin{proof}
Because $\Phi(T \times H)\cong \Phi(T) \times \Phi(H)$ by~\cite[Theorem~2]{dlab1960}, we conclude that the order of $\Phi(T \times H)$ is even.  Since $d(T \times H)=2$, we have $X_{\Phi(T \times H)} \in \mathcal{E}_2$, so $\type(X_{\Phi(T \times H)})=(0,0,2)$ by~Proposition~\ref{prop:GENeventypes}, and hence $\gen(T \times H)=*0$.
\end{proof}

\begin{rem}\label{rem:Burnside}
If $d(T \times H)=2$ and $\Phi(T)$ is trivial, then it follows from the Burnside Basis Theorem ~\cite[Theorem~12.2.1]{MarshallHall} that $T$ is isomorphic to either $\mathbb{Z}_2$ or $\mathbb{Z}_2^2$.  
\end{rem}

\begin{lemma}\label{lem:relatinggeneratingsets}
If $T$ a $2$-group and $H$ is a group of odd order, then $\langle S, (t,h) \rangle= \langle S, (t,e), (e,h) \rangle$ for all subsets $S$ of $T \times H$, $t \in T$ of order $2$, and $h \in H$.
\end{lemma}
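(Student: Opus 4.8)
The plan is to show that each of the two subgroups $\langle S,(t,h)\rangle$ and $\langle S,(t,e),(e,h)\rangle$ contains a generating set of the other, so it suffices to prove the single inclusion $\langle (t,h)\rangle \supseteq \{(t,e),(e,h)\}$ together with the obvious $(t,h)\in\langle (t,e),(e,h)\rangle$, and then append $S$ to both sides.

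First I would record the arithmetic. Since $t$ has order $2$ in the $2$-group $T$ and $h$ lies in the odd-order group $H$, the order $n:=|h|$ is odd, so $\gcd(2,n)=1$. I would then compute inside the cyclic group $\langle (t,h)\rangle$: because $n$ is odd we have $t^{n}=t$ and $h^{n}=e$, hence
\[
(t,h)^{n}=(t^{n},h^{n})=(t,e),
\]
which shows $(t,e)\in\langle (t,h)\rangle$. Next, $(t,h)^{2}=(t^{2},h^{2})=(e,h^{2})$; since $n$ is odd, $2$ is invertible mod $n$, so $h$ is a power of $h^{2}$, and therefore $(e,h)$ is a power of $(e,h^{2})=(t,h)^{2}$, giving $(e,h)\in\langle (t,h)\rangle$. (Alternatively one can simply note $(e,h)=(t,e)^{-1}(t,h)$ once $(t,e)$ is known to lie in the subgroup.) Consequently $\langle (t,e),(e,h)\rangle\subseteq\langle (t,h)\rangle$, and appending $S$ yields $\langle S,(t,e),(e,h)\rangle\subseteq\langle S,(t,h)\rangle$.

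For the reverse inclusion, I would just observe $(t,h)=(t,e)(e,h)\in\langle (t,e),(e,h)\rangle$, so $\langle S,(t,h)\rangle\subseteq\langle S,(t,e),(e,h)\rangle$. Combining the two inclusions gives the claimed equality.

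I do not expect any genuine obstacle here: the only point requiring a word of care is the use of the hypothesis $|h|$ odd (equivalently, coprimality of the orders of $t$ and $h$) to extract $(t,e)$ and $(e,h)$ individually from powers of $(t,h)$; everything else is a one-line manipulation of products in $T\times H$.
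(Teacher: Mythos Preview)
Your argument is correct and essentially identical to the paper's: both show $(t,e)=(t,h)^{n}$ using that $n=|h|$ is odd, then recover $(e,h)$ (the paper via $(t,h)^{n+1}$, which is exactly your alternative $(t,e)^{-1}(t,h)$), and handle the reverse inclusion by $(t,h)=(t,e)(e,h)$. There is nothing to add.
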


\begin{proof}
Since $(t,h)=(t,e)(e,h) \in \langle S, (t,h) \rangle$, we have $\langle S, (t,h) \rangle \subseteq \langle S, (t,e), (e,h) \rangle$.  Let $n$ be the order of $h$. Then $(t,e)=(t^n,h^n)=(t,h)^n \in \langle S, (t,h) \rangle$ since $n$ is odd.  We also have $(e,h)=(t,h)^{n+1} \in \langle S, (t,h) \rangle$. Hence $\langle S, (t,h) \rangle \supseteq \langle S, (t,e), (e,h) \rangle$.
\end{proof}

\begin{proposition}
\label{prop:Z2}
If $H$ is a group of odd order and $d(\mathbb{Z}_2 \times H)=2$, then $\gen(\mathbb{Z}_2 \times H)=*0$.
\end{proposition}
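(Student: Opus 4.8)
The plan is to show $\gen(\mathbb{Z}_2\times H)=*0$ by proving the initial position $\emptyset$ has nim-value $0$. Since $d(\mathbb{Z}_2\times H)=2$ forces the group to be nontrivial and noncyclic, $\emptyset$ and every singleton $\{g\}$ are nonterminal, so $\nim(\emptyset)=\mex\{\nim(\{g\})\mid g\in\mathbb{Z}_2\times H\}$ and it is enough to exhibit, for each $g$, an option of $\{g\}$ with nim-value $0$. Write $G=\mathbb{Z}_2\times H$ and let $t$ be the involution of $\mathbb{Z}_2$. I will use two kinds of nim-$0$ options: generating sets, which lie in $X_G$ of type $(|G|\bmod 2,0,0)$, and even-sized positions whose structure class lies in $\mathcal{E}_{\geq 2}$, which has type $(0,0,2)$ or $(0,0,1)$ by Proposition~\ref{prop:GENeventypes}.

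The case split is on $\delta_G(\{g\})$, which lies in $\{1,2\}$ since $\langle g\rangle\neq G$ and $\delta_G(\{g\})\leq\delta_G(\emptyset)=d(G)=2$. If $\delta_G(\{g\})=1$, choose $g'$ with $\langle g,g'\rangle=G$; then $g'\neq g$ and $\{g,g'\}\in\opt(\{g\})$ is a generating set, of nim-value $0$. If $\delta_G(\{g\})=2$ and $g\notin\{e\}\times H$, then $g$ has even order, so $\{g,e\}\in\opt(\{g\})$ has structure class $X_{\lceil g\rceil}$, which has even order (it contains $g$) and deficiency $\delta_G(\{g\})=2$ by Proposition~\ref{prop:deltaofsubsets}, hence lies in $\mathcal{E}_2$; as $|\{g,e\}|=2$ is even, $\nim(\{g,e\})=0$. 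If $\delta_G(\{g\})=2$ and $g=(e,h)$ with $h\in H$, then $\{(e,h),(t,e)\}\in\opt(\{g\})$; by Lemma~\ref{lem:relatinggeneratingsets} (with $S=\emptyset$) we have $\langle(e,h),(t,e)\rangle=\langle(t,h)\rangle$, so this position has structure class $X_{\lceil(t,h)\rceil}$, which has even order since it contains $(t,h)$. The crux is to show its deficiency is at least $2$: from $\delta_G(\{(e,h)\})=2$ one deduces $\delta_H(\{h\})\geq 2$—otherwise, picking $h_0$ with $\langle h,h_0\rangle=H$ and using $(t,h_0)^{|h_0|}=(t,e)$ (as $|h_0|$ is odd) gives $(e,h_0)\in\langle(e,h),(t,h_0)\rangle$ and hence $\langle(e,h),(t,h_0)\rangle=G$, contradicting $\delta_G(\{(e,h)\})=2$—and then Lemma~\ref{lem:FactorDeficiencies} yields $\delta_G(\{(t,h)\})\geq\delta_H(\{h\})\geq 2$, so $X_{\lceil(t,h)\rceil}\in\mathcal{E}_{\geq 2}$ and $\nim(\{(e,h),(t,e)\})=0$. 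In all three cases $\{g\}$ has an option of nim-value $0$, so $\nim(\{g\})\neq0$ for every $g$, hence $\nim(\emptyset)=0$ and $\gen(G)=*0$.

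The main obstacle is the last case: one must check that replacing $\{(e,h),(t,e)\}$ by $\{(t,h)\}$ changes neither the parity nor the deficiency of the structure class, and one must establish $\delta_G(\{(t,h)\})\geq 2$; this inequality is where the oddness of $|H|$ is genuinely used, while the exact hypothesis $d(G)=2$ (rather than $d(G)\geq2$) is what makes the two-value split on $\delta_G(\{g\})$ exhaustive. Everything else is a direct application of Propositions~\ref{prop:GENeventypes} and~\ref{prop:deltaofsubsets} and Lemmas~\ref{lem:FactorDeficiencies} and~\ref{lem:relatinggeneratingsets}.
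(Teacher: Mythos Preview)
Your proof is correct and follows essentially the same strategy as the paper: respond to any first move $g$ so that the resulting two-element position has nim-value $0$, splitting into the cases $\delta_G(\{g\})=1$, $g$ of even order with $\delta_G(\{g\})=2$, and $g=(e,h)$ with $\delta_G(\{g\})=2$, and in the last case playing $(t,e)$. The only difference is cosmetic: in that last case the paper assumes $X_{\lceil P\rceil}\in\mathcal{E}_1$ and derives a single element $(1,v)$ with $\langle(0,y),(1,v)\rangle=G$ directly, whereas you first prove $\delta_H(\{h\})\ge 2$ and then invoke Lemma~\ref{lem:FactorDeficiencies}; the underlying contradiction is the same.
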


\begin{proof}
Since $d(\mathbb{Z}_2 \times H)=2$, we conclude that $d(H)=2$. Let $g:=(x,y) \in \mathbb{Z}_2 \times H$ be the element the first player initially selects, so the game position is $\{g\}\in X_{\lceil g\rceil}\in\mathcal{D}_{\ge 1}$.  If $X_{\lceil g \rceil}\in \mathcal{D}_1$, then the nim-number of $\{g\}$ is clearly not zero so the next player to move, which is the second player, wins. 

If $X_{\lceil g\rceil}\in \mathcal{E}_2$, then the second player selects the identity element of $\mathbb{Z}_2 \times H$ and keeps the resulting game position $\{ g,e \}$ in $X_{\lceil g,e \rceil}=X_{\lceil g\rceil}$.  By Proposition~\ref{prop:GENeventypes}, $\type(X_{\lceil g \rceil})=(0,0,2)$. So the second player wins since the nim-number of $\{ g,e \}$ is $0$. 

It remains to consider the case when $X_{\lceil g\rceil}\in \mathcal{O}_2$, and hence $g=(0,y)$. In this case, the second player picks $(1,e) \in \mathbb{Z}_2 \times H$. We show that the resulting game position $P:=\{(0,y),(1,e)\}$ is in $X_{\lceil P \rceil}\in \mathcal{E}_2$. This will prove that the second player wins since again $P=*0$ by Proposition~\ref{prop:GENeventypes}.

For a contradiction, assume that $X_{\lceil P \rceil}\in \mathcal{E}_1$, so $\langle (0,y),(1,e),(u,v) \rangle=\mathbb{Z}_2 \times H$ for some $(u,v) \in \mathbb{Z}_2 \times H$.  If $u=0$, then by Lemma~\ref{lem:relatinggeneratingsets}, 
\[ 
\mathbb{Z}_2 \times H=\langle (0,y),(1,e),(0,v) \rangle = \langle (0,y),(1,v) \rangle.
\] 
If $u=1$, then we claim that \[\mathbb{Z}_2 \times H=\langle (0,y),(1,e),(1,v) \rangle = \langle (0,y),(1,v) \rangle.\]  Clearly, $\langle (0,y),(1,v) \rangle \subseteq \langle (0,y),(1,e),(1,v) \rangle$,  and $(1,e) \in \langle (0,y),(1,v) \rangle$ by Lemma~\ref{lem:relatinggeneratingsets}, so $\langle (0,y),(1,e),(1,v) \rangle \subseteq \langle (0,y),(1,v) \rangle$.  Thus, the claim holds. In either case, there is an $h\in \mathbb{Z}_2 \times H$ such that $\langle g,h \rangle = \mathbb{Z}_2 \times H$. This implies that $X_{\lceil g \rceil}\in \mathcal{O}_1$, which contradicts the assumption that $X_{\lceil g \rceil} \in \mathcal{O}_2$. Thus, we must have $X_{\lceil P \rceil}\in \mathcal{E}_2$.
\end{proof}

\begin{proposition}
\label{prop:Z2Z2one}
If $H$ is a group of odd order such that $d(H) \leq 1$, then $\gen(\mathbb{Z}_2^2\times H)=*1$.
\end{proposition}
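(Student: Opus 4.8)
The plan is to run a structural induction of the same kind as in Propositions~\ref{prop:OrderGIsOdd} and~\ref{prop:GENeventypes}, after first showing that $G:=\mathbb{Z}_2^2\times H$ has a very restricted supply of structure classes. The hypothesis $d(H)\le 1$ means $H$ is cyclic (or trivial), so $\delta_H(K)\le 1$ for every $K\le H$; together with $\delta_{\mathbb{Z}_2^2}(\{e\})=d(\mathbb{Z}_2^2)=2$ this will force $\mathcal{Y}$ to consist only of $X_G\in\mathcal{E}_0$, the classes in $\mathcal{E}_1$, and the classes in $\mathcal{O}_2$. I would start by recording $d(G)=2$: Lemma~\ref{lem:FactorDeficiencies} gives $d(G)=\delta_G(\{e\})\ge\delta_{\mathbb{Z}_2^2}(\{e\})=2$, while, choosing $a,b\in\mathbb{Z}_2^2$ with $\langle a,b\rangle=\mathbb{Z}_2^2$ and a generator $h$ of $H$ and applying Lemma~\ref{lem:relatinggeneratingsets}, one gets $\langle(a,h),(b,e)\rangle=\langle(a,e),(e,h),(b,e)\rangle=G$, so $d(G)\le 2$.

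The crux is the deficiency classification of intersection subgroups. Because $|\mathbb{Z}_2^2|$ and $|H|$ are coprime, every subgroup $I\le G$ factors as $I=\pi_1(I)\times\pi_2(I)$, and $|I|$ is odd exactly when $\pi_1(I)=\{e\}$. If $I\in\mathcal{J}$ has odd order, then $\pi_1(I)=\{e\}$, so Lemma~\ref{lem:FactorDeficiencies} yields $\delta_G(I)\ge\delta_{\mathbb{Z}_2^2}(\{e\})=2$, while $\{e\}\subseteq I$ gives $\delta_G(I)\le\delta_G(\{e\})=d(G)=2$; hence $\delta_G(I)=2$ and $X_I\in\mathcal{O}_2$. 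If $I\in\mathcal{J}$ is even with $I\ne G$, then $\pi_1(I)$ has index $1$ or $2$ in $\mathbb{Z}_2^2$ and $\delta_H(\pi_2(I))\le 1$, so choosing $a\in\mathbb{Z}_2^2$ with $\langle\pi_1(I),a\rangle=\mathbb{Z}_2^2$ (taking $a$ an involution unless $\pi_1(I)=\mathbb{Z}_2^2$) and $h\in H$ with $\langle\pi_2(I),h\rangle=H$ gives $\langle I,(a,h)\rangle=G$ --- via Lemma~\ref{lem:relatinggeneratingsets} when $a$ has order $2$ --- so $\delta_G(I)=1$ and $X_I\in\mathcal{E}_1$. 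Since $d(G)=2$ forces $\mathcal{D}_{\ge 3}=\emptyset$, we conclude $\mathcal{O}_0=\mathcal{O}_1=\emptyset$, $\mathcal{E}_{\ge 2}=\emptyset$, and $\mathcal{E}_0=\{X_G\}$. In particular $\Phi(G)\in\mathcal{J}$ has $\delta_G(\Phi(G))=d(G)=2$ by Corollary~\ref{cor:deltaofsubsets}, so $\Phi(G)$ is odd and $X_{\Phi(G)}\in\mathcal{O}_2$.

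Now I would finish the computation. By Proposition~\ref{prop:GENeventypes} (valid since $|G|$ is even) we have $\type(X_G)=(0,0,0)$ and $\type(X_I)=(0,1,2)$ for every $X_I\in\mathcal{E}_1$. I would then show by structural induction that $\type(X_I)=(1,1,0)$ for every $X_I\in\mathcal{O}_2$. By Proposition~\ref{prop:setoptions} every option of such an $X_I$ lies in $\mathcal{D}_1\cup\mathcal{D}_2$, which by the classification equals $\mathcal{E}_1\cup\mathcal{O}_2$; so by the previous paragraph and the inductive hypothesis --- which applies to every $\mathcal{O}_2$-option, including $X_I$ itself should it be its own option --- we get $\otype(X_I)\subseteq\{(0,1,2),(1,1,0)\}$, and Proposition~\ref{prop:setoptions} also supplies an option in $\mathcal{D}_1=\mathcal{E}_1$, so $(0,1,2)\in\otype(X_I)$. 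In either case the set $A_I$ of middle components of $\otype(X_I)$ is $\{1\}$ and the set $B_I$ of final components is $\{2\}$ or $\{0,2\}$, so the type recursion of Section~\ref{section:preliminaries} (odd case) returns $b=\mex(\{1\})=0$ and $a=\mex(\{0,2\})=1$, i.e.\ $\type(X_I)=(1,1,0)$, as claimed. This is the same self-referential-but-harmless phenomenon already exploited in Proposition~\ref{prop:OrderGIsOdd}. Finally, $\gen(\mathbb{Z}_2^2\times H)$ is the second component of $\type(X_{\Phi(G)})$ and $X_{\Phi(G)}\in\mathcal{O}_2$, so $\gen(\mathbb{Z}_2^2\times H)=*1$.

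I expect the main obstacle to be the deficiency classification in the middle step; once it is in place the rest is mechanical. Concretely, the work is in showing that every odd intersection subgroup of $\mathbb{Z}_2^2\times H$ has deficiency exactly $2$ (the lower bound being forced by $\pi_1(I)$ being a proper cyclic subgroup of $\mathbb{Z}_2^2$) and every proper even one has deficiency exactly $1$; this is where both hypotheses --- the $\mathbb{Z}_2^2$ direct factor and $d(H)\le 1$ --- are genuinely used. A secondary subtlety, handled exactly as in Proposition~\ref{prop:OrderGIsOdd}, is that an $\mathcal{O}_2$ class can be an option of itself, so one must check that the type recursion outputs $(1,1,0)$ whether or not $(1,1,0)$ appears among the option types, which it does, thereby legitimizing the induction.
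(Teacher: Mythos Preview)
Your argument is correct, but it takes a quite different route from the paper. The paper's proof is a two-line observation: since $d(H)\le 1$ forces $H$ to be cyclic, $\mathbb{Z}_2^2\times H$ is abelian, and the result then follows directly from \cite[Corollary~8.16]{ErnstSieben}, which already computed $\gen$ for all finite abelian groups. Your proof, by contrast, is self-contained within the paper's framework: you classify all structure classes by parity and deficiency (showing $\mathcal{Y}=\mathcal{E}_0\cup\mathcal{E}_1\cup\mathcal{O}_2$), then run the type recursion directly. This is more work, but it has the advantage of not leaning on an external citation, and the structure you uncover---every odd class is in $\mathcal{O}_2$ with type $(1,1,0)$, every even proper class is in $\mathcal{E}_1$---dovetails neatly with the analysis of Proposition~\ref{prop:Z2Z2two} for $d(H)=2$, where $\mathcal{O}_2$ splits further into $\mathcal{O}_2^a$ and $\mathcal{O}_2^b$. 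The paper's approach buys brevity; yours buys uniformity with the surrounding results and makes the role of the hypothesis $d(H)\le 1$ transparent (it is exactly what kills $\mathcal{E}_2$). Your handling of the self-option subtlety is also fine: as you note, the recursion outputs $(1,1,0)$ regardless of whether $(1,1,0)$ is already present in $\otype(X_I)$, and this is the same device used implicitly in Proposition~\ref{prop:OrderGIsOdd}.
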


\begin{proof}
Since $d(H)\leq 1$, $\mathbb{Z}_2^2\times H$ is abelian and we conclude that $\gen(\mathbb{Z}_2^2\times H)=*1$ by~\cite[Corollary~8.16]{ErnstSieben}. 
\end{proof}

\begin{figure}
\includegraphics[scale=1.3]{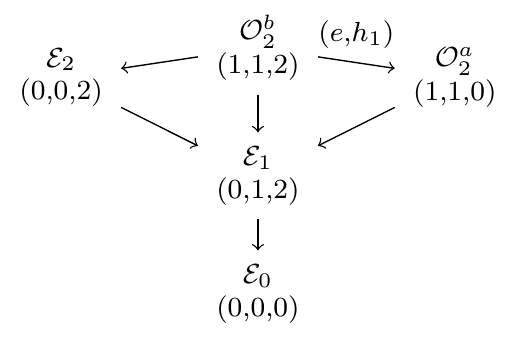}
\caption{\label{fig:MainDiagram}Structure classes for $\gen(\mathbb{Z}_2^2\times H)=*1$ with $d(H)=2$.}
\end{figure}

\begin{proposition}
\label{prop:Z2Z2two}
If $H$ is a group of odd order such that $d(H)=2$, then $\gen(\mathbb{Z}_2^2\times H)=*1$. 
\end{proposition}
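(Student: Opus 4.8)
The goal is to compute the type of every structure class of $\gen(\mathbb{Z}_2^2\times H)$ when $d(H)=2$, so that the second component of $\type(X_{\Phi(\mathbb{Z}_2^2\times H)})$ comes out to be $1$. First I would observe that $d(\mathbb{Z}_2^2\times H)=2$: indeed $d(\mathbb{Z}_2^2\times H)\ge d(\mathbb{Z}_2^2)=2$ and $d(\mathbb{Z}_2^2\times H)\le 2$ because a pair of generators of the form $(t_1,h_1),(t_2,h_2)$ works, using that $\mathbb{Z}_2^2$ needs two generators and $H$ needs two generators and we can pair them up (here the key tool is Lemma~\ref{lem:relatinggeneratingsets}, which lets us split a generator $(t,h)$ with $t$ an involution into $(t,e)$ and $(e,h)$, hence also lets us merge). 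So $X_{\Phi}\in\mathcal{E}_2$ and I need $\type(X_\Phi)=(0,0,1)$... wait, that would give nim-number $0$, not $1$. So in fact $\Phi(\mathbb{Z}_2^2\times H)$ must be \emph{trivial} here: since $d=2$ and $\Phi(\mathbb{Z}_2^2)$ is trivial, Remark~\ref{rem:Burnside} applies and $\Phi(H)$ must also be trivial (otherwise $X_\Phi\in\mathcal{E}_2$ forces $*0$ by Proposition~\ref{prop:GENeventypes}, contradicting the claim), so the starting position $\emptyset$ lies in the structure class $X_{\{e\}}$, which is \emph{odd-sized as a set} but even parity... no: $\Phi$ trivial means $X_\Phi=X_{\{e\}}$ and $|\{e\}|=1$ is odd, so the structure class has odd parity, and $\emptyset\in X_{\{e\}}$ with $|\emptyset|$ even. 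Then $\nim(\gen(\mathbb{Z}_2^2\times H))=\nim(\emptyset)$ is the second component of $\type(X_{\{e\}})=(1,a,b)$, and I must show $a=1$.

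\textbf{Main computation.} The plan is to classify the structure classes by deficiency and parity and compute types by structural induction using the Proposition of Section~\ref{section:preliminaries} together with Proposition~\ref{prop:setoptions}. The structure classes in $\mathcal{D}_0=\{X_G\}$ have type $(0,0,0)$ or $(1,0,0)$ depending on parity of $|G|$ — here $|G|$ is even so $\type(X_G)=(0,0,0)$. For the even classes $\mathcal{E}_m$, Proposition~\ref{prop:GENeventypes} already hands me $(0,0,0),(0,1,2),(0,0,2),(0,0,1)$ for $m=0,1,\ge 2$ (note $\mathcal{E}_{\ge 3}$ is empty here since $d(G)=2$, so really only $m=0,1,2$ occur). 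The real work is the odd classes. I claim $\mathcal{O}$ consists of classes of deficiency $1$ and $2$ only (deficiency $0$ is impossible for an odd subgroup since $G$ has even order, and deficiency $\ge 3$ is ruled out by $d(G)=2$). For $X_I\in\mathcal{O}_1$: its options lie in $\mathcal{D}_1\cup\mathcal{D}_0$ by Proposition~\ref{prop:setoptions}, it has an option in $\mathcal{D}_0=\{X_G\}$ of type $(0,0,0)$, and its other options are in $\mathcal{D}_1=\mathcal{E}_1\cup\mathcal{O}_1$ with types among $(0,1,2)$ and (inductively) $(1,?,?)$; I will show $\type(X_I)=(1,1,0)$ for $X_I\in\mathcal{O}_1$. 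For $X_I\in\mathcal{O}_2$: options lie in $\mathcal{D}_2\cup\mathcal{D}_1$, there is an option in $\mathcal{D}_1$, and I expect $\type(X_I)=(1,1,2)$ or similar — the exact value must be pinned down carefully, and this is where a picture like Figure~\ref{fig:MainDiagram} is essential to track which option types actually occur.

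\textbf{The key structural input and the main obstacle.} The crux — and the hardest part — is showing that the option types that \emph{actually appear} are exactly the ones needed to make the $\mex$ computations give the claimed values, in particular that $X_{\{e\}}\in\mathcal{O}_2$ has an option in $\mathcal{E}_1$ (so that a type with second coordinate differing appropriately is present) but its odd options and even options conspire to force second coordinate $1$. Concretely I will need: (a) every $X_I\in\mathcal{O}_2$ has an option in $\mathcal{E}_1$, i.e. from any odd intersection subgroup of deficiency $2$ we can add one element of even order landing in an even class of deficiency $1$ — this uses that $T=\mathbb{Z}_2^2$ supplies involutions $(t,e)$ and that adjoining such an element drops the "$T$-part deficiency" by exactly one while keeping the "$H$-part" unfinished (Lemma~\ref{lem:FactorDeficiencies} gives the lower bound $\delta\ge\max$, and an explicit completion gives the upper bound); and (b) that $X_{\{e\}}$ does \emph{not} have an option of type $(0,1,2)$, i.e. no single element $(u,v)$ has $X_{\lceil u,v\rceil}\in\mathcal{E}_1$ — equivalently no single element together with the identity generates a deficiency-$1$ even class — which holds because an element of $\mathbb{Z}_2^2\times H$ projects to at most one of the three involutions of $\mathbb{Z}_2^2$, so $\lceil u,v\rceil$ still needs two more generators for the $T$-part unless $v$ already "uses up" an $H$-generator, and a careful case analysis on the order and projections of $(u,v)$ (mirroring the argument in Proposition~\ref{prop:Z2}) shows $X_{\lceil u,v\rceil}$ is never in $\mathcal{E}_1$. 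Once (a) and (b) are established, the induction runs mechanically: the types stabilize, $\type(X_{\{e\}})=(1,1,\ast)$, and hence $\gen(\mathbb{Z}_2^2\times H)=*1$. I expect the bookkeeping of which $\mathcal{O}_m$-to-$\mathcal{E}_{m-1}$ and $\mathcal{O}_m$-to-$\mathcal{O}_{m-1}$ transitions occur, rather than any single slick idea, to be the bulk of the proof, and the diagram in Figure~\ref{fig:MainDiagram} will carry most of that weight.
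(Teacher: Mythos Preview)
Your proposal has several genuine errors that would derail the argument.

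\textbf{Error 1: $\Phi(H)$ need not be trivial.} You argue that $\Phi(H)$ must be trivial, reasoning that otherwise $X_{\Phi(G)}\in\mathcal{E}_2$ would force $*0$. But $\Phi(G)=\Phi(\mathbb{Z}_2^2)\times\Phi(H)=\{e\}\times\Phi(H)$ always has \emph{odd} order, regardless of whether $\Phi(H)$ is trivial. So $X_{\Phi(G)}\in\mathcal{O}_2$ in every case, and there is no contradiction to exploit.

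\textbf{Error 2: $\mathcal{O}_1$ is empty, not nonempty.} You set out to compute $\type(X_I)$ for $X_I\in\mathcal{O}_1$, but in fact $\mathcal{O}_1=\emptyset$. Any odd-order intersection subgroup $L$ satisfies $L\le\{e\}\times H$, hence $\delta_G(L)\ge\delta_{\mathbb{Z}_2^2}(\{e\})=2$ by Lemma~\ref{lem:FactorDeficiencies}. This vacancy is the central structural fact of the proof: combined with Proposition~\ref{prop:setoptions} it forces every $X_I\in\mathcal{O}_2$ to have an option in $\mathcal{E}_1$.

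\textbf{Error 3: your claim (b) is false.} You assert that no single element $(u,v)$ has $X_{\lceil(u,v)\rceil}\in\mathcal{E}_1$. But if $\langle h_1,h_2\rangle=H$ and $t,s$ generate $\mathbb{Z}_2^2$, then $\langle(t,h_1),(s,h_2)\rangle=G$ by Lemma~\ref{lem:relatinggeneratingsets}, so $X_{\lceil(t,h_1)\rceil}\in\mathcal{E}_1$. Thus $X_{\Phi(G)}$ \emph{does} have an option of type $(0,1,2)$.

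\textbf{What you are missing.} The paper's proof hinges on splitting $\mathcal{O}_2$ into two pieces: $\mathcal{O}_2^a$, the classes with no option in $\mathcal{E}_2$ (type $(1,1,0)$), and $\mathcal{O}_2^b$, those with an option in $\mathcal{E}_2$ (type $(1,1,2)$). One shows $X_{\Phi(G)}\in\mathcal{O}_2^b$ by exhibiting the option $X_{\lceil\Phi(G),(t,e)\rceil}\in\mathcal{E}_2$, and one shows every $X_I\in\mathcal{O}_2^b$ also has an option in $\mathcal{O}_2^a$ (namely $X_{\lceil I,(e,h_1)\rceil}$). The delicate point is proving that classes in $\mathcal{O}_2^a$ have no option in $\mathcal{O}_2^b$, so their option types stay within $\{(0,1,2),(1,1,0)\}$; this uses that an $\mathcal{E}_2$-option of a hypothetical $\mathcal{O}_2^b$-option would pull back to an $\mathcal{E}_2$-option of $X_I$ itself. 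Your outline does not isolate this dichotomy, and the mex computations cannot be completed without it.
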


\begin{proof}
Let $G=\mathbb{Z}_2^2 \times H$.  We have $d(G)=d(H)=2$ since $\mathbb{Z}_2^2$ and $H$ have coprime orders.  Hence $\mathcal{D}_{\ge 3}=\emptyset$.
Let 
$$
\mathcal{O}_2^a := \{X_I \in \mathcal{O}_2 \mid \opt(X_I)\cap\mathcal{E}_2=\emptyset \},
\quad 
\mathcal{O}_2^b := \mathcal{O}_2 \setminus \mathcal{O}_2^a.
$$
We will show that $\mathcal{O}_1= \emptyset$, and that $\mathcal{E}_m$ for $m \in \{0,1,2\}$, $\mathcal{O}_2^a$, and $\mathcal{O}_2^b$ are nonempty. Then we will use structural induction on the structure classes to show that 
\begin{equation}\label{eqn:types}
	   \type(X_I) = 
	     \begin{dcases}
               (0,0,0), & \text{if } X_I \in \mathcal{E}_0\\
               (0,1,2), & \text{if } X_I \in \mathcal{E}_1\\
               (0,0,2), & \text{if } X_I \in \mathcal{E}_2\\
               (1,1,0), & \text{if } X_I \in \mathcal{O}_2^a \\
               (1,1,2), & \text{if } X_I \in \mathcal{O}_2^b, 
	     \end{dcases}
\end{equation}
as shown in Figure~\ref{fig:MainDiagram}. 

First, we show that $\mathcal{O}_1$ is empty.  Assume $L$ is an intersection subgroup of odd order.  Then $L=\{e\} \times K$ for some subgroup $K$ of $H$.  Since $\delta_{\mathbb{Z}_2^2}(\{e\})=2$, we see that $\delta_G(L) \geq 2$ by Lemma~\ref{lem:FactorDeficiencies}. Hence $X_L \not\in \mathcal{O}_1$, and we conclude that $\mathcal{O}_1=\emptyset$.

Now, we show that $\mathcal{E}_m$ is nonempty for $m \in \{0,1,2\}$.  Let $t$ be a nontrivial element of $\mathbb{Z}_2^2$ and consider $K:=\lceil (t,e) \rceil$, which has even order.  Since $(t,e)$ is contained in the maximal subgroups $\langle t \rangle \times H$ and $\mathbb{Z}_2^2 \times M$ for every maximal subgroup $M$ of $H$, it follows that $K$ is a subgroup of $\langle t \rangle \times \Phi(H)$.  Then \[2 = d(G) \geq \delta_G(K) \geq \delta_G(\langle t \rangle \times \Phi(H)) \geq \delta_H(\Phi(H)) = d(H) = 2,\] by Lemma~\ref{lem:FactorDeficiencies} and~\cite[Corollary~3.3]{BeneshErnstSiebenGeneralizedDihedral}.  Thus, $X_K \in \mathcal{E}_2$.  Since $\mathcal{E}_2$ is nonempty, we can conclude that $\mathcal{E}_1$ and $\mathcal{E}_0$ are nonempty by repeated use of Proposition~\ref{prop:setoptions}.
By Proposition~\ref{prop:GENeventypes}, the types of structure classes in $\mathcal{E}_m$ for $m \in \{0,1,2\}$ are as described in Equation~(\ref{eqn:types}).

We now show that $\mathcal{O}_2^a \not= \emptyset$.   If $u \in \mathbb{Z}_2^2$ is nontrivial with $t \not=u$, then $\langle t \rangle \times H$ and $\langle u \rangle \times H$ are both maximal subgroups of $G$ whose intersection is $\{e\} \times H$.  Hence $\{e\} \times H$ is an intersection subgroup of $G$ with odd order.    Any intersection subgroup $I$ properly containing $\{e\} \times H$ must be isomorphic to $\mathbb{Z}_2 \times H$, so $X_I \in \mathcal{E}_1$ by Lemma~\ref{lem:FullFactors}. Thus $X_{\{e\} \times H} \in \mathcal{O}_2^a$ since $\mathcal{O}_1 = \emptyset$.

Next, we show that $\mathcal{O}_2^b \not= \emptyset$.    By~\cite[Theorem~2]{dlab1960}, \[\Phi(G) = \Phi(\mathbb{Z}_2^2) \times \Phi(H) = \{e\} \times \Phi(H),\] so $\Phi(G)$ has odd order. Hence, $X_{\Phi(G)} \in \mathcal{O}_{d(G)} = \mathcal{O}_2$ by Corollary~\ref{cor:deltaofsubsets}.  Then  \[2 \geq \delta_G(\Phi(G) \cup \{t\}) \geq \delta_G(\mathbb{Z}_2^2 \times \Phi(H)) = \delta_H(\Phi(H)) = d(H) =2\] by Lemma~\ref{lem:FullFactors} and Corollary~\ref{cor:deltaofsubsets}.  So $X_{\lceil \Phi(G),t\rceil} \in \mathcal{E}_2$ by Proposition~\ref{prop:deltaofsubsets}. Thus, $X_{\lceil \Phi(G),t\rceil} \in \mathcal{E}_2$ is an option of $X_{\Phi(G)}$, so $X_{\Phi(G)} \in \mathcal{O}_2^b$.
 
It remains to show that $\type(X_I)=(1,1,0)$ if $X_I \in \mathcal{O}_2^a$ and $\type(X_I)=(1,1,2)$ if $X_I$ is in $\mathcal{O}_2^b$.  If $X_I \in \mathcal{O}_2$, then $X_I$  must have an option in $\mathcal{E}_1$ by Proposition~\ref{prop:setoptions} since $\mathcal{O}_1=\emptyset$, and so $(0,1,2) \in \otype(X_I)$. 

Let $X_I \in \mathcal{O}_2^a$. We first show that $X_I$ has no option in $\mathcal{O}_2^b$.  Suppose toward a contradiction that $X_J \in \mathcal{O}_2^b$ is an option of $X_I$, and let $X_J$ have an option $X_K \in \mathcal{E}_2$.  Let $v \in K$ such that $v$ has order $2$.    Then $\lceil I,v \rceil \leq \lceil J,v \rceil \leq  K$, so $X_I$ has an option $X_{\lceil I,v \rceil} \in \mathcal{E}_2$, which contradicts the  definition of $\mathcal{O}_2^a$.    Thus, $\otype(X_I)$ is either $\{(0,1,2)\}$ or $\{(0,1,2),(1,1,0)\}$ by induction, so $\type(X_I)=(1,1,0)$.

Finally, let $X_I \in \mathcal{O}_2^b$.  Then $X_I$ has an option in $\mathcal{E}_2$ by the definition of $\mathcal{O}_2^b$.  We will show that $X_I$ also has an option in $\mathcal{O}_2^a$.  Let $h_1,h_2 \in H$ such that $H=\langle h_1,h_2\rangle$, and let $J:=\lceil I,(e,h_1)\rceil$. We will show that $X_J \in \mathcal{O}_2^a$ by showing that $X_J \in \mathcal{O}_2$ and $X_{\lceil I, (e,h_1),(s,x)\rceil} \not\in \mathcal{E}_2$ for all $(s,x) \in G$. 
Since $I \cup \{(e,h_1)\}  \subseteq \{e\} \times H$ and $\{e\} \times H$ is an intersection subgroup of odd order, we must have $J \leq \{e\} \times H$. Hence $X_J \in \mathcal{O}$, which implies that $X_J \in \mathcal{O}_2$ since $\mathcal{O}_1 = \emptyset$.  

Now let $(s,x) \in G$.  We will prove that $X_{\lceil J,(s,x)\rceil} \not\in \mathcal{E}_2$.    If $(s,x)$ has odd order, then $s=e$, so $\langle I,(e,h_1),(s,x) \rangle \leq \{e\} \times H$, and thus $X_{\lceil J,(s,x)\rceil} \in \mathcal{O}_2 \not= \mathcal{E}_2$.  Thus, we may assume that $s$ is nontrivial, and we let $w \in \mathbb{Z}_2^2$ be such that $\langle s, w \rangle = \mathbb{Z}_2^2$.  Then \[\langle I, (e,h_1), (s,x), (w,h_2) \rangle =  \langle I,(e,h_1),(e,h_2), (e,x),(s,e),(w,e) \rangle = G\] by two applications of Lemma~\ref{lem:relatinggeneratingsets}, which implies $X_{\lceil I,(e,h_1),(s,x) \rceil} \in \mathcal{E}_1 \not=\mathcal{E}_2$.   Hence $X_J \in \mathcal{O}_2^a$.   Thus, \[\{(0,1,2),(0,0,2),(1,1,0)\} \subseteq \otype(X_I) \subseteq \{(0,1,2),(0,0,2),(1,1,0),(1,1,2)\},\] and so $\type(X_I)=(1,1,2)$.
\end{proof}

The results in this section lead to our main theorem.

\begin{theorem}
\label{mainThm}
If $G= T \times H$ where $T$ is a $2$-group and $H$ is a group of odd order, then
\[
	   \gen(G) = 
	     \begin{dcases}
               *1, & \text{if } |G| \text{ is odd and } d(G) \geq 3\\
               *1, & \text{if } G \cong \mathbb{Z}_{4k} \text{ for some } k\\
               *1, & \text{if } G \cong \mathbb{Z}_2^2 \times H \text{ with } d(H) \leq 2\\
               *2, & \text{if } G \cong \mathbb{Z}_{2} \\
               *2, & \text{if } |G| \text{ is odd and } d(G) \in \{1,2\}\\
               *4, & \text{if } G \cong \mathbb{Z}_{4k+2} \text{ for some } k \geq 1\\
	       *0, & \text{otherwise}.
	     \end{dcases}
\] 
\end{theorem}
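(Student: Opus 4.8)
The plan is to derive Theorem~\ref{mainThm} by assembling the propositions of this section through a case analysis on the structure of $T$ and on the value of $d(G)=d(T\times H)$. First I would dispose of the case $T=\{e\}$: then $G\cong H$ has odd order, and Proposition~\ref{prop:OrderGIsOdd} gives the nim-number directly, with $|H|=1$ (equivalently $d(G)=0$) falling into the final ``otherwise'' clause, $d(H)\in\{1,2\}$ giving $*2$, and $d(H)\ge 3$ giving $*1$, which matches the odd-order lines of the statement.

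For $T$ nontrivial, $|G|$ is even, and I would split according to $d(G)$, following the order in which the section is organized. If $d(G)=1$, then $T\times H$ is cyclic; since $T$ is a $2$-group it is cyclic, so $G$ is a cyclic group of even order at least $2$, hence isomorphic to $\mathbb{Z}_2$, to $\mathbb{Z}_{4k}$, or to $\mathbb{Z}_{4k+2}$ with $k\ge1$, and Proposition~\ref{prop:dIs1} produces the outcomes $*2$, $*1$, $*4$. If $d(G)\ge 4$, Proposition~\ref{prop:dIsLarge} gives $*0$; if $d(G)=3$, Proposition~\ref{prop:dgt3} gives $*0$. If $d(G)=2$, I would branch on whether $\Phi(T)$ is trivial: if $\Phi(T)\ne\{e\}$, Proposition~\ref{PhiTriv} gives $*0$; if $\Phi(T)=\{e\}$, then Remark~\ref{rem:Burnside} forces $T\cong\mathbb{Z}_2$ or $T\cong\mathbb{Z}_2^2$. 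When $T\cong\mathbb{Z}_2$, Proposition~\ref{prop:Z2} gives $*0$; when $T\cong\mathbb{Z}_2^2$, coprimality of $|\mathbb{Z}_2^2|$ and $|H|$ forces $d(H)\le 2$, and Propositions~\ref{prop:Z2Z2one} and~\ref{prop:Z2Z2two} together give $*1$, matching the $\mathbb{Z}_2^2\times H$ line.

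The substantive part is the bookkeeping that ties this case split to the seven lines of the statement. I would check that the six explicit cases are pairwise disjoint: parity separates the two odd-order cases from the rest, and among groups of even order $\mathbb{Z}_{4k}$ and $\mathbb{Z}_{4k+2}$ have cyclic Sylow $2$-subgroups whereas $\mathbb{Z}_2^2\times H$ does not, $|\mathbb{Z}_{4k}|$ and $|\mathbb{Z}_{4k+2}|$ differ in their $2$-part, $|\mathbb{Z}_{4k+2}|\ge 6$, and $\mathbb{Z}_2$ is singled out by order. I would then verify that every occurrence of $*0$ above genuinely lands in the ``otherwise'' clause: when $d(G)\ge 3$ this is immediate; when $d(G)=2$ and $\Phi(T)$ is nontrivial, $T\not\cong\mathbb{Z}_2,\mathbb{Z}_2^2$ so $G$ is not of the form $\mathbb{Z}_2^2\times H$ (and it is not cyclic nor of odd order); and when $d(G)=2$ and $T\cong\mathbb{Z}_2$, the group $G$ is not cyclic since $d(G)=2$, and $|G|\equiv 2\pmod 4$ rules out $G\cong\mathbb{Z}_2^2\times H'$. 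I expect this exclusivity-and-exhaustiveness check, rather than any individual proposition, to be the only real obstacle.
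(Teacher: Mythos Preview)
Your proposal is correct and follows essentially the same case analysis as the paper: the paper's proof is precisely this outline, branching first on the parity of $|G|$, then on $d(G)$, then on $\Phi(T)$ and $T$, invoking the same propositions at each leaf. Your additional bookkeeping on disjointness and on why each $*0$ case lands in the ``otherwise'' clause is more explicit than what the paper writes, but the underlying argument is identical.
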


\begin{proof}
Each case of the statement follows from an earlier result we proved. The following outline shows the case analysis.  

\begin{myEnumerate}[labelindent=2mm,leftmargin=*]
\item $|G|$ is odd (Proposition~\ref{prop:OrderGIsOdd})
\item $|G|$ is even
\begin{myEnumerate}[labelindent=2mm,leftmargin=*]
  \item $d(G)=1$ (Proposition~\ref{prop:dIs1})
  \item $d(G)\ge 4$ (Proposition~\ref{prop:dIsLarge})
  \item $d(G)= 3$ (Proposition~\ref{prop:dgt3})
  \item $d(G)=2$
    \begin{myEnumerate}[labelindent=2mm,leftmargin=*]
    \item $\Phi(T)$ is nontrivial (Proposition~\ref{PhiTriv})
    \item $\Phi(T)$ is trivial 
      \begin{myEnumerate}[labelindent=0mm,leftmargin=*]
      \item $T\cong \mathbb{Z}_2$ (Proposition~\ref{prop:Z2})
      \item $T\cong \mathbb{Z}_2^2$
        \begin{myEnumerate}[labelindent=3mm,leftmargin=*]
        \item $d(H)\leq 1$ (Proposition~\ref{prop:Z2Z2one})
        \item $d(H)=2$ (Proposition~\ref{prop:Z2Z2two})
        \end{myEnumerate}
      \end{myEnumerate}
    \end{myEnumerate}
\end{myEnumerate}
\end{myEnumerate}
The two cases for when $\Phi(T)$ is trivial are justified by Remark~\ref{rem:Burnside}.
\end{proof}

Recall that every nilpotent group, and hence every abelian group, can be written in the form $T \times H$, where $T$ is a finite $2$-group $T$ and $H$ is a group of odd order. As a consequence, Theorem~\ref{mainThm} provides a complete classification of the possible nim-values for achievement games played on nilpotent groups. Moreover, Theorem~\ref{mainThm} is a generalization of Corollary~8.16 from~\cite{ErnstSieben}, which handles abelian groups only.  Note that even in the case when $H$ is not nilpotent, $H$ must be solvable by the Feit--Thompson Theorem~\cite{FeitThompson}.

\begin{exam} 
The smallest non-nilpotent group that has a Sylow $2$-direct factor is isomorphic to $\mathbb{Z}_2 \times \left(\mathbb{Z}_7 \rtimes \mathbb{Z}_3\right)$, which has order $42$.
\end{exam}

\begin{exam} 
 The smallest group that does not have a Sylow $2$-direct factor is $S_3$.  That is, $S_3$ is the smallest group not covered by Theorem~\ref{mainThm}. However, the possible nim-values for achievement and avoidance games played on symmetric groups were completely classified in~\cite{BeneshErnstSiebenSymAlt}.  The dihedral groups $D_n$ for $n \geq 3$ are not covered by Theorem~\ref{mainThm} either, but these groups were analyzed in~\cite{ErnstSieben}.
\end{exam}


\section{Further Questions}


We mention a few open problems.

\begin{enumerate}
\item What are the nim-numbers of non-nilpotent solvable groups of even order that do not have a Sylow $2$-direct factor?
\item The smallest group $G$ for which $\nim(\gen(G))$ has not been determined by results in ~\cite{BeneshErnstSiebenSymAlt,BeneshErnstSiebenDNG,BeneshErnstSiebenGeneralizedDihedral,ErnstSieben} or Theorem~\ref{mainThm} is the dicyclic group $\mathbb{Z}_3 \rtimes \mathbb{Z}_4$.  All dicyclic groups have Frattini subgroups of even order. Hence these groups have nim-number $0$ as a consequence of Proposition~\ref{prop:GENeventypes}.  The smallest group not covered in the current literature is $\mathbb{Z}_3 \times S_3$.  What are the nim-numbers for groups of the form $\mathbb{Z}_m \times S_n$ for $m \geq 2$ and $n \geq 3$? 

\item The nim-numbers of some families of nonsolvable groups were determined in~\cite{BeneshErnstSiebenSymAlt}.  Can we determine the nim-numbers for all nonsolvable groups?
\end{enumerate}

\bibliographystyle{plain}
\bibliography{game}

\end{document}